\newtheorem{theorem}{Theorem}[section]
\newtheorem{corollary}[theorem]{Corollary}
\newtheorem{lemma}[theorem]{Lemma}
\theoremstyle{definition}
\newtheorem{definition}[theorem]{Definition}
\theoremstyle{remark}
\newtheorem{example}[theorem]{Example}
\theoremstyle{remark}
\newtheorem{remark}[theorem]{Remark}
\theoremstyle{remark}
\newtheorem{question}[theorem]{Question}
\newcommand{\R}{\mathbb{R}}
\newcommand{\Z}{\mathbb{Z}}
\newcommand{\N}{\mathbb{N}}
\newcommand{\T}{\mathbf{T}}
\newcommand{\inlinegfx}[1]{\includegraphics[scale=0.18]{#1}}
\title{Stabilization of the Khovanov Homotopy Type of Torus Links}
\author{Michael Willis \\
Department of Mathematics, University of Virginia\\
\href{mailto:msw3ka@virginia.edu}{\texttt{msw3ka@virginia.edu}}}
\begin{document}

\maketitle

\begin{abstract}
The structure of the Khovanov homology of $(n,m)$ torus links has been extensively studied.  In particular, Marko Sto\v{s}i\'{c} proved that the homology groups stabilize as $m\rightarrow\infty$.  We show that the Khovanov homotopy types of $(n,m)$ torus links, as constructed by Robert Lipshitz and Sucharit Sarkar, also become stably homotopy equivalent as $m\rightarrow\infty$.  We provide an explicit bound on values of $m$ beyond which the stabilization begins.  As an application, we give new examples of torus links with non-trivial $Sq^2$ action.
\end{abstract}

\section{Introduction}
In \cite{Khov} Mikhail Khovanov introduced the Khovanov homology of a knot or link $K$, which categorifies the Jones polynomial of $K$.  Since then, the Khovanov homology of torus links $T(n,m)$ in particular has been widely studied, and many interesting structural results have been found.  In \cite{Sto}, Marko Sto\v{s}i\'{c} used a long exact sequence to prove that the Khovanov homology groups of the $T(n,m)$ stabilize as $m\rightarrow\infty$.  A generalization of this result to the context of tangles came in the form of \cite{Roz}, where Lev Rozansky showed that the Khovanov chain complexes for torus braids also stabilize (up to chain homotopy) in a suitable sense to categorify the Jones-Wenzl projectors.  (At roughly the same time, Benjamin Cooper and Slava Krushkal gave an alternative construction for the categorified projectors in \cite{CK}.)  These results, along with connections between Khovanov homology, HOMFLYPT homology, Khovanov-Rozansky homology, and the representation theory of the rational Cherednik algebra (see \cite{GORS}) have led to conjectures about the structure of the stable Khovanov homology groups in the limit $Kh(T(n,\infty))$ (see \cite{GOR}, and results along these lines in \cite{Hog}).

More recently, in \cite{LS}, Robert Lipshitz and Sucharit Sarkar introduced the Khovanov homotopy type of a knot or link $K$.  This is a link invariant taking the form of a spectrum whose reduced cohomology is the Khovanov homology of $K$.  In a subsequent paper \cite{LLS}, Lipshitz, Sarkar, and Lawson pose the question of stabilization for torus links: given a fixed number of strands $n$, does the Khovanov homotopy type of the torus links $T(n,m)$ stabilize as $m\rightarrow\infty$?

In this paper, we give an affirmative answer to this question, and provide an explicit bound for $m$ beyond which specific wedge summands (which depend on $q$-degree) of the Khovanov homotopy type stabilize.  More precisely, we prove the following theorem and corollaries.

\begin{theorem}
\label{finalresult}
Fix $n\in\N$, and let $\T^m$ denote the $(n,m)$ torus link (viewed as the closure of the $(n,m)$ torus braid on $n$ strands).  For $j\in\Z$ let $\chi^j(K)$ denote the Khovanov homotopy type, in $q$-degree $j$, of a knot or link $K$.  Then for any fixed $q$-degree $a\in\Z$, there exists $M\in\Z$ such that for any $m\geq M$, $\chi^{a+k(n-1)}(\T^{m+k})\simeq\chi^{a}(\T^m)$ for all $k\in\N$, where $\simeq$ denotes stable homotopy equivalence.
\end{theorem}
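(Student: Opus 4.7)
The strategy is to lift Stošić's long exact sequence argument to the level of spectra using the cofiber sequences of Khovanov homotopy types intrinsic to the Lipshitz--Sarkar construction. Specifically, for any link diagram $D$ with a distinguished crossing $c$, the Khovanov spectrum $\chi(D)$ sits in a cofiber sequence built from $\chi(D_0)$ and $\chi(D_1)$ with appropriate grading shifts, and this restricts to a cofiber sequence of spectra in each fixed $q$-degree. The plan is to apply this repeatedly to the $n-1$ new positive crossings that appear when passing from $\T^m$ to $\T^{m+1}$.

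Resolving all $n-1$ new crossings simultaneously organizes $\chi(\T^{m+1})$ as an iterated mapping cone indexed by an $(n-1)$-dimensional cube of diagrams. At the ``all-zero'' vertex of the cube, the diagram isotopes through Reidemeister-I and circle-removal moves to $\T^m$, and a careful bookkeeping of writhe and grading conventions shows that this vertex contributes $\chi^{a+(n-1)}(\T^m)$ when one looks in $q$-degree $a+(n-1)$ on the $\T^{m+1}$ side. Every other vertex of the cube produces an ``error'' spectrum coming from a simpler diagram, typically a torus-braid-like diagram on strictly fewer strands. Iterating this passage $k$ times relates $\chi^{a+k(n-1)}(\T^{m+k})$ to $\chi^a(\T^m)$ modulo a finite collection of such error spectra arising from the non-extremal vertices at each stage.

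The heart of the proof is then to show that, in $q$-degree $a+k(n-1)$, each of these error spectra is stably contractible once $m$ is large enough; this is where Stošić's explicit vanishing estimates must enter. On the chain level one needs to verify that the Khovanov chain complex of each error diagram is generator-free in the relevant $q$-degree, so that the corresponding Lipshitz--Sarkar spectrum is literally a point and hence trivially contractible. Lifting chain-level vanishing to stable contractibility of the corresponding wedge summand of $\chi$ is, I expect, the main technical obstacle: chain-homotopy triviality alone does not a priori lift to the spectrum, so one really wants a genuine absence of generators in the target $q$-degree. I anticipate handling this by an auxiliary induction on the strand number $n$, since the lower-dimensional faces of the cube produce torus-braid-like diagrams on at most $n-1$ strands, whose generator support can be controlled by the inductive hypothesis together with Stošić's bound. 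Assembling the iterated cofiber sequences with these vanishing statements then produces both the stable equivalence $\chi^{a+k(n-1)}(\T^{m+k})\simeq\chi^a(\T^m)$ and an explicit threshold $M=M(a,n)$ extracted from Stošić's estimate.
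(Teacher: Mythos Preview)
Your overall shape is right---resolve the new crossings and kill the non-$\T^m$ pieces in the target $q$-degree---but the mechanism you propose for the killing has a genuine gap. You assert that ``chain-homotopy triviality alone does not a priori lift to the spectrum'' and therefore aim for literal generator-freeness of each error complex. This is backwards. For the Lipshitz--Sarkar construction, acyclicity of the upward-closed piece \emph{is} enough to collapse it without changing the stable homotopy type: this is their Lemma~3.32 (recast here as Lemma~\ref{Lemma3.32}), and it is the single tool that drives the argument. Conversely, your proposed criterion---no generators at all in the relevant $q$-degree---\emph{fails} for the diagrams $E_i$ that actually sit inside the cube for $\T^{m+1}$, because those diagrams carry many negative crossings ($c_i\geq m+n-2$ by Lemma~\ref{StosicLemma1}), so the $q$-support of $KC(E_i)$ extends far below where its homology lives. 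The paper instead proves $\alpha_i<\min_q(E_i)$, i.e.\ acyclicity in the needed degree, by bounding $\min_q$ via the Reidemeister-simplified positive diagram $E_{i,red}$, and then invokes Lemma~\ref{Lemma3.32}.

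Two smaller points. The paper resolves the $n-1$ new crossings \emph{sequentially}, producing only $n-1$ error terms $E_1,\dots,E_{n-1}$ rather than the $2^{n-1}-1$ non-extremal vertices of your full cube; this is also what fits the upward/downward-closed hypothesis of Lemma~\ref{Lemma3.32} cleanly. And your plan to control the errors by induction on the strand number does not match the structure of the $E_i$: they are not torus links on fewer strands, but rather links that Sto\v{s}i\'{c} shows are isotopic to \emph{positive} links, with $\min_q(E_i)$ bounded by a direct count on the all-zero resolution of the positive diagram (Lemmas~\ref{xibound} and~\ref{minqbound})---no induction on $n$ is needed or used.
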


\begin{corollary}
\label{finalresultcor}
For any fixed $n\in\N$ as above, there exists a well-defined limiting Khovanov homotopy type $\chi(\T^\infty)$.
\end{corollary}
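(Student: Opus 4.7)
The plan is to build $\chi(\T^\infty)$ as an infinite wedge of spectra, one summand per renormalized $q$-degree, and invoke Theorem~\ref{finalresult} to see that each summand is well-defined.

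First, I would observe that Theorem~\ref{finalresult} does not directly stabilize $\chi^a(\T^m)$ at a fixed $q$-degree $a$ as $m\to\infty$: the $q$-support of $\chi(\T^m)$ drifts upward, and stabilization actually occurs along diagonals of slope $(n-1)$ in the $(a,m)$-plane. I would therefore reindex by a shifted grading. For each $c\in\Z$, consider the sequence $\{\chi^{c+m(n-1)}(\T^m)\}_m$. Applying Theorem~\ref{finalresult} with the role of $a$ played by $c+m(n-1)$, there exists $M(c)\in\N$ such that for every $m\geq M(c)$ and every $k\geq 0$,
\[
\chi^{c+(m+k)(n-1)}(\T^{m+k}) \;=\; \chi^{(c+m(n-1))+k(n-1)}(\T^{m+k}) \;\simeq\; \chi^{c+m(n-1)}(\T^m).
\]
Hence the sequence is eventually constant up to stable equivalence, and I would define $\chi^c(\T^\infty)$ to be this common value.

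Finally, set
\[
\chi(\T^\infty) \;:=\; \bigvee_{c\in\Z}\chi^c(\T^\infty),
\]
viewed as a spectrum graded by the renormalized $q$-degree $c$, so that the $c$-th summand is read off from the actual $q$-degree $c+m(n-1)$ of $\chi(\T^m)$ for any sufficiently large $m$. The substantive content of the argument is the computation in the displayed equation above, which is just an unpacking of Theorem~\ref{finalresult}; I do not anticipate a significant obstacle beyond choosing this sensible grading convention. The remaining points are bookkeeping: confirming $M(c)$ is finite for every $c$ (immediate from Theorem~\ref{finalresult}), and noting that the infinite wedge is unproblematic since wedges are honest coproducts in the stable category with no finiteness requirement. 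If desired, one could additionally verify that the definition is independent of the choice of renormalization by $m(n-1)$, in the sense that any other linear reparametrization would yield a stably equivalent limit up to an overall $q$-shift.
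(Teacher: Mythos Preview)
Your proposal is correct and follows essentially the same approach as the paper: reindex by the stable $q$-degree $j=a-m(n-1)$ (your $c$), invoke Theorem~\ref{finalresult} to see that each diagonal sequence $\{\chi^{j+m(n-1)}(\T^m)\}_m$ stabilizes, and assemble the limit as a wedge over $j\in\Z$. The only cosmetic difference is that the paper packages each summand as the homotopy colimit of the explicit sequence of cofibrations (inclusions of subcomplexes) coming from Lemma~\ref{Lemma3.32}, whereas you simply take the eventual common stable homotopy type; since the sequence stabilizes these definitions agree.
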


\begin{corollary}
\label{infmanystsquares}
Fixing $n=3$, we have that $\chi(\T^m)$ admits a non-trivial action of the Steenrod Square $\text{Sq}^2$ for all $m\geq 3$.
\end{corollary}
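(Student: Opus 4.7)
The strategy is to combine Theorem~\ref{finalresult} with a finite collection of base-case computations. Since $\text{Sq}^2$ is a stable cohomology operation, any stable equivalence $\chi^{a+2k}(\T^{m+k}) \simeq \chi^{a}(\T^{m})$ supplied by Theorem~\ref{finalresult} (specialized to $n=3$, for which $n-1=2$) transports a non-trivial $\text{Sq}^2$ on one side to a non-trivial $\text{Sq}^2$ on the other. Thus if $\text{Sq}^2$ acts non-trivially on $\chi^{a}(\T^{m_0})$ for some $q$-degree $a$ in the stable range, it acts non-trivially on $\chi^{a+2(m-m_0)}(\T^{m})$, and hence on $\chi(\T^{m})$, for every $m \geq m_0$.

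The first step would be to extract from the proof of Theorem~\ref{finalresult} an explicit stabilization threshold $M=M(n=3,a)$ for the $q$-degrees of interest; the introduction emphasizes that such a bound is obtainable, and for $n=3$ it is expected to be small. The second step would be to identify a torus link $\T^{m_0}$ with $m_0 \geq M$ and a $q$-degree $a$ in the stable range for which $\chi^{a}(\T^{m_0})$ already carries a non-trivial $\text{Sq}^2$. Natural candidates are the torus knots $T(3,4) \cong 8_{19}$ and $T(3,5) \cong 10_{124}$, both shown in \cite{LS} to admit non-trivial $\text{Sq}^2$; here one must additionally check that the $q$-degree of the known non-trivial class actually falls in the range covered by the stabilization theorem.

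The final step is to settle the remaining finitely many cases $3 \leq m < m_0$ directly. The link $T(3,3)$ (three components) has cube of resolutions of size $2^{9}$ and can be handled by hand or by invoking the Lipshitz--Sarkar construction implemented in Seed's software; analogous direct checks cover any intermediate $m$. At this point the corollary follows: for $m \leq m_0$ non-triviality comes from the direct computation, and for $m > m_0$ it is inherited from $\chi^{a}(\T^{m_0})$ via Theorem~\ref{finalresult}.

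The principal obstacle is the last step: producing a non-trivial $\text{Sq}^2$ class on each small $T(3,m)$ and matching its $q$-degree with the $+2$ shift coming from Theorem~\ref{finalresult}, so that the propagation actually lands on the claimed wedge summand. A secondary concern is extracting the bound $M$ cleanly enough to make the base-case range finite and explicit. Once both are in hand, propagation to all $m \geq m_0$ is entirely formal, using only the stability of $\text{Sq}^2$ under homotopy equivalence.
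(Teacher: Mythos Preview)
Your core strategy matches the paper's: combine the $n=3$ stabilization with the known non-trivial $\text{Sq}^2$ on $T(3,4)$ from \cite{LS2}. The difference is that the paper eliminates your ``final step'' entirely. Theorem~\ref{n=3case} supplies the explicit threshold $m\geq\frac{a+2}{4}$; the non-trivial $\text{Sq}^2$ on $T(3,4)$ lives in $\chi^{11}(\T^4)$, and along the line of stable $q$-degree $j=3$ this corresponds to $a=9$ at $m=3$. Since $3\geq\frac{9+2}{4}$, the value $m=3$ is already in the stable range, so
\[
\chi^{9}(\T^3)\simeq\chi^{11}(\T^4)\simeq\chi^{13}(\T^5)\simeq\cdots,
\]
and the single known computation for $T(3,4)$ propagates \emph{backward} to $m=3$ as well as forward to all larger $m$. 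No direct calculation for $T(3,3)$ (by hand, by Seed's program, or otherwise) is needed. What you flag as the ``principal obstacle'' therefore dissolves once the sharp bound is in hand; conversely, what you call the secondary concern---extracting $M$ cleanly enough---is in fact the entire content of the paper's argument.
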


In short, the proof of Theorem \ref{finalresult} proceeds as follows.  In passing from $\T^m$ to $\T^{m+1}$, $n-1$ new positive crossings are introduced, and the normalization of the $q$-degree is shifted as well.  We resolve the new crossings one at a time.  Each time we do so, we can view the chain complex being split into an upward closed and downward closed piece, in the sense of Lipshitz's and Sarkar's Lemma 3.32 in \cite{LS}, recast as Lemma \ref{Lemma3.32} in this paper.  Having fixed our original $q$-degree, we find a bound on $m$ that forces the upward closed pieces (corresponding to the 1-resolutions of crossings) to be acyclic, while simultaneously respecting the shifts in $q$-degree both as we resolve crossings, and as we continue the passage from $\T^{m+1}$ to $\T^{m+2}$ and so forth.  Lemma \ref{Lemma3.32} will allow us to collapse the acyclic pieces, and this will give us the result.

\begin{remark}
\label{StosicDifference}
This proof is similar in spirit to Sto\v{s}i\'{c}'s proof of the stabilization of homology in \cite{Sto}, and will use some similar notations.  However, there is an important difference.  Sto\v{s}i\'{c}'s proof provides stabilization of homology in a fixed homological degree (allowing the $q$-degree to remain free).  On the other hand, our proof will require stabilization of the full chain complex based on a fixed $q$ degree (allowing the homological degree to remain free).  This will in turn require more careful bounding on several parameters.
\end{remark}

\begin{question}
\label{GORSquestion}
As alluded to above, Gorsky, Oblomkov, and Rasmussen have conjectured in \cite{GOR} that the limiting Khovanov homology of $\T^\infty$ for fixed $n\in\N$ is dual to the homology of the differential graded algebra generated by even variables $x_0,\dots,x_n$ and odd variables $\xi_0,\dots,\xi_n$ with respect to a differential $d$ of the form
\[d(\xi_k)=\Sigma_{i=0}^k x_ix_{k-i}.\]
See \cite{GOR} for more details on the homological and $q$ gradings of the generators.  Given Corollary \ref{finalresultcor}, it seems natural to ask whether there exists some similarly elegant description of the limiting spectrum $\chi(\T^\infty)$.
\end{question}

This paper is arranged as follows.  In section 2 we review some of the main properties of the Khovanov homotopy type and end with the key lemma alluded to above that allows us to collapse specific acyclic subcomplexes of our Khovanov chain complexes.  In section 3, we introduce the necessary notations and definitions, placing bounds on the various quantities involved, before outlining the main strategy and giving an example.  In section 4, we use the bounds established in section 3 to prove the main result for $n\geq 4$.  The cases $n=2$ and $n=3$ are markedly simpler than $n\geq 4$, and are discussed in sections 5 and 6.  Section 7 will then use these results to define the Khovanov homotopy type of the torus links in the limit as $m\rightarrow\infty$, giving explicit formulae to calculate the resulting spectra.

\subsection{Acknowledgements}
The author would like to thank Robert Lipshitz for many illuminating and encouraging discussions about the Khovanov homotopy type, as well as Nick Kuhn for several helpful conversations on spectra.  The author would also like to send special thanks to his advisor, Slava Krushkal, for his continued advice, encouragement, and guidance in both the pursuit of this result and the preparation of this paper.

\section{The Khovanov homotopy type}

\subsection{Basic Definition and Properties}

In \cite{LS}, Lipshitz and Sarkar construct the Khovanov homotopy type of a knot or link $K$, denoted $\chi(K)$.  This homotopy type is a spectrum that satisfies the following properties:

\begin{itemize}
\item $\chi(K)$ is the suspension spectrum of a CW complex.
\item $\tilde{H}^i(\chi(K))=Kh^i(K)$, the (normalized) Khovanov homology of the knot or link $K$.
\item $\chi(K)$ splits as a wedge sum over $q$-degree.  That is, we can write 
\[\chi(K)=\bigvee_{j\in\Z}\chi^j(K)\]
where $\tilde{H}^i(\chi^j(K))=Kh^{i,j}(K)$, the Khovanov homology in $q$-degree $j$.  Any knot or link $K$ has non-zero Khovanov homology in only finitely many $q$-degrees, so this wedge sum is actually finite.
\item Each $\chi^j(K)$ is constructed combinatorially using the Khovanov chain complex $KC^j(K)$ in $q$-degree $j$, together with a choice of ``Ladybug Matching'' that uses the diagram for $K$ (see section 5.4 in \cite{LS}).  In fact, the generators of $KC^j(K)$ correspond precisely to cells in the CW complex which $\chi^j(K)$ is the suspension spectrum of.
\item Each $\chi^j(K)$ is an invariant of the knot or link $K$.  That is to say, the stable homotopy type of $\chi^j(K)$ does not depend on the diagram used to portray $K$, nor on the various choices that are made during the construction.
\end{itemize}

See \cite{LS} for details on the construction, which uses the Khovanov chain complex $KC(K)$ (and the ``Ladybug Matching'' choice) to build a Khovanov flow category that, in a suitable sense, covers the Morse flow category for downward flows between critical points on the vertices of a cube.  The moduli spaces of this Khovanov flow category (which are `manifolds with corners') are embedded in a suitable Euclidean space, and the embedding is framed so that the 0-dimensional moduli spaces (flows between generators on adjacent vertices) are signed according to the sign convention on the Khovanov cube.  This framed embedding is then used to define (large dimensional) cells and attaching maps to build a CW complex whose cellular cochain complex matches $KC(K)$ up to a shift in homological degree.  $\chi(K)$ is the suspension spectrum of this CW complex, suitably de-suspended to account for the homological shift.

We note here that in a subsequent paper (\cite{LS2}), Lipshitz and Sarkar explore the use of cohomology operations on the Khovanov homotopy type (since the spectra are only stable homotopy invariants, there is no well-defined cup product) to detect homotopy types that are not simply wedge sums of Moore spaces, and in \cite{LS3}, to produce new bounds on the slice genus of knots via Rasmussen's $s$-invariant defined in \cite{Ras}.  In particular, the authors derive a purely combinatorial formula for calculating the $\text{Sq}^1$ and $\text{Sq}^2$ Steenrod Square operations.  The simplest knot for which they find such operations to be non-trivial is $T(3,4)$, and indeed the Khovanov homological thickness of torus knots (also explored in \cite{Sto}) makes such knots good candidates for seeking out more nontrivial Steenrod Squares.  Our Corollary \ref{infmanystsquares} gives an example of this.

\subsection{The Collapsing Lemma}

For the convenience of the reader, we restate Lemma 3.32 from \cite{LS} in the form that we will use.

\begin{lemma}
\label{Lemma3.32}
Consider the Khovanov chain complex $KC(K)$ of a knot or link $K$ represented as a chain map:
\begin{equation}
\label{Lemma3.32format}
KC(K) = \big(KC(K'') \longrightarrow KC(K')\big)
\end{equation}
where $K'$ and $K''$ are the links resulting from taking the 1-resolution and 0-resolution, respectively, of a single crossing in the diagram for $K$.  Then if $KC(K')$ is acyclic, we have $\chi(K'')\simeq\chi(K)$ via the induced inclusion map on the spectra.  The same is true when restricting to a specific $q$-degree, so long as the degree shifts are accounted for when viewing $KC(K'')$ and $KC(K')$ as parts of $KC(K)$ on the one hand, or as independent complexes constructed from the diagrams $K''$ and $K'$ on the other.

\end{lemma}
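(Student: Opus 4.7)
The plan is to realize the mapping-cone decomposition $KC(K) = \bigl(KC(K'') \to KC(K')\bigr)$ geometrically as a cofiber sequence of CW spectra
\[
\chi(K'') \longrightarrow \chi(K) \longrightarrow \Sigma^{\epsilon}\chi(K')
\]
for a suitable integer shift $\epsilon$, and then deduce from the acyclicity of $KC(K')$ that the right-hand term is stably contractible, so that the inclusion $\chi(K'') \to \chi(K)$ is a stable homotopy equivalence.

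To set up the cofiber sequence, I would first recall that in the Lipshitz--Sarkar construction the generators of $KC(K)$ are in bijection with the cells of a CW complex $X$ whose (desuspended) suspension spectrum is $\chi(K)$. Let $S''$ and $S'$ denote the sets of generators with $0$ and with $1$ at the chosen crossing, respectively. The key combinatorial observation is that $S''$ is closed under cellular boundary: if $g \in S''$ and $h$ appears in $\partial g$, then $h$ is obtained from $g$ by a single edge of the Khovanov cube, and the only such edge that would leave $S''$ is the flip of the chosen crossing from $0$ to $1$, which raises homological grading and hence cannot appear in $\partial$. Thus $S''$ indexes a bona fide sub-CW-complex $X'' \hookrightarrow X$ whose cellular cochains recover $KC(K'')$, and the quotient $X/X''$ has cells indexed by $S'$ with cellular cochains equal to $KC(K')$ up to the standard homological and $q$-degree shifts coming from the normalization of the sub-diagrams.

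Suppose now that $KC(K')$ is acyclic. Then $X/X''$ has trivial reduced integer cohomology, and therefore its suspension spectrum is stably contractible: any bounded-below, finite-type CW spectrum with trivial integral cohomology has trivial homotopy groups by the Hurewicz theorem for spectra, hence is contractible. The cofiber sequence above then collapses to the asserted equivalence $\chi(K'') \simeq \chi(K)$ induced by the inclusion $X'' \hookrightarrow X$. The $q$-graded refinement is identical: apply the argument summand-by-summand in the wedge decomposition $\chi(K) = \bigvee_j \chi^j(K)$, tracking the $q$-shifts that distinguish $KC^j(K)$ from the relevant graded pieces of $KC(K'')$ and $KC(K')$ dictated by the usual crossing normalizations.

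The main obstacle I expect is the geometric step: verifying that the restriction of the Khovanov flow category to $S''$ is genuinely a sub-flow-category in the sense of Lipshitz--Sarkar, with moduli spaces, framed embeddings, and Ladybug Matchings all restricting compatibly so that it realizes $\chi(K'')$ on the nose. Once this compatibility is established --- essentially automatic because the sub-cube of resolutions fixing the chosen crossing at $0$ is literally the Khovanov cube of $K''$, and similarly for $K'$ --- the sub-CW-complex structure, the cofiber sequence, and the collapsing conclusion all follow formally.
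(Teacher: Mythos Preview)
Your proposal is correct and is essentially the same argument as the paper's, just unpacked: the paper simply observes that $KC(K'')$ and $KC(K')$ correspond to downward closed and upward closed flow subcategories and then cites Lemma~3.32 of \cite{LS} directly, whereas you have sketched the content of that lemma (the sub-CW-complex inclusion, the resulting cofiber sequence, and the Hurewicz/Whitehead argument that an acyclic finite CW spectrum is contractible). The ``main obstacle'' you flag---compatibility of moduli spaces, framings, and ladybug matchings under restriction to the sub-cube---is exactly what the cited Lipshitz--Sarkar lemma packages, so both routes lead to the same place.
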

\begin{proof} The assumption implies that $KC(K')$ and $KC(K'')$ correspond to upward and downward closed flow subcategories of the corresponding flow category for $K$, as described in section 3.4.2 in \cite{LS}.  Thus the statement in the lemma is a specific case of Lemma 3.32 in the same paper.  The final point about restricting to a specific $q$-degree is clear since the Khovanov chain complex splits as a direct sum over $q$-degree, and this splitting is respected in the construction of the Khovanov homotopy type which splits as a wedge sum over $q$-degree.

\end{proof}

This lemma allows us to collapse an entire acyclic subcomplex of the Khovanov chain complex in a single $q$-degree without changing the corresponding wedge summand of the Khovanov homotopy type.  The proof of our main result will rely on an inductive application of this idea.


\section{The Setup}

\subsection{Notations and Bounding Lemmas}

We begin this section by collecting the basic notations that will be used throughout the paper, including a few that have already been introduced.

\begin{itemize}
\item $n\in\N$ is fixed, and denotes the number of strands for a given torus link.
\item $\T^m$ will denote the $(n,m)$ torus link viewed as the closure of the $(n,m)$ torus braid on $n$ strands.
\item $a\in\Z$ is fixed, and will denote a fixed $q$-degree.
\item $\chi^j(K)$ will denote the Khovanov homotopy type, in $q$-degree $j$, of the knot or link $K$.
\item $KC^j(K)$ will denote the (normalized) Khovanov chain complex, in $q$-degree $j$, associated to the knot or link $K$.  If the superscript is omitted, the entire Khovanov chain complex over all $q$-degrees is being considered.
\item $\deg_h()$ will denote homological degree, and $\deg_q()$ will denote $q$-degree, of elements in $KC(K)$.  We recall the normalized formulas here:
\begin{equation}
\label{hdeg}
\deg_h()=\#(\text{1-resolutions})-n_-
\end{equation}
\begin{equation}
\label{qdeg}
\deg_q()=\deg_h()+(n_+-n_-)+(\#(v_+)-\#(v_-))
\end{equation}
where $n_+$ and $n_-$ denote the number of positive and negative crossings in the diagram for $K$.  We use $v_+$ and $v_-$ to denote the two standard basis elements in the vector spaces assigned to circles by the Khovanov functor.
\end{itemize}

Our next definition is modelled on the notations of Sto\v{s}i\'{c} in \cite{Sto}.

\begin{definition}
\label{DandE}
For any $m\in\N$, and for any $i\in\{1,2,\dots,n-1\}$, define $D_i^m$ as the link diagram resulting from taking the 0-resolution $\left(\inlinegfx{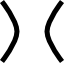}\right)$ of the first $i$ crossings in $\T^{m+1}$, starting from the top left crossing of the braid representation, and working diagonally rightward and downward.  The orientation of $D_i^m$ is induced by the orientation of $\T^{m+1}$.  As a convention, we also define $D_0^m=\T^{m+1}$.  Define $E^m_i$ as the link diagram resulting from taking the 0-resolution of the first $i-1$ crossings in this way, but taking the 1-resolution $\left(\inlinegfx{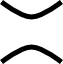}\right)$ of the $i$th crossing.  See the figure below for clarification in the case $n=4$ and $m=2$.  NOTE: We postpone the choice of orientation for $E_i^m$ until after Lemma \ref{StosicLemma1}, which will provide a preferred choice - see Remark (\ref{Eorientation}).
\end{definition}

\begin{figure}[h]
\label{DandEfigure}
\centering
\includegraphics[scale=.2]{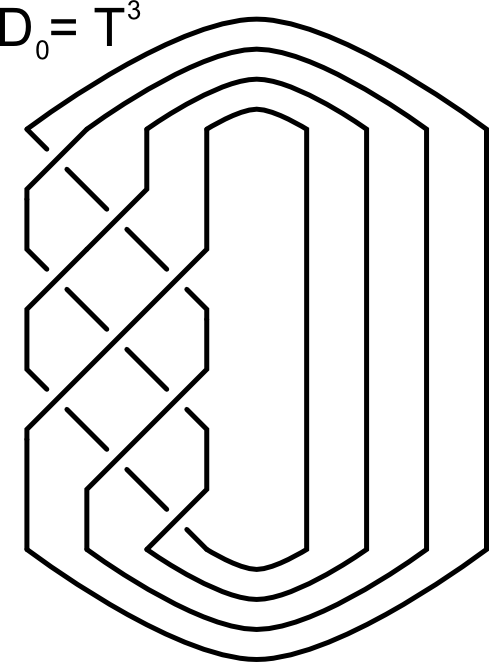}
\\
\vspace{.2in}
\centering
\includegraphics[scale=.2]{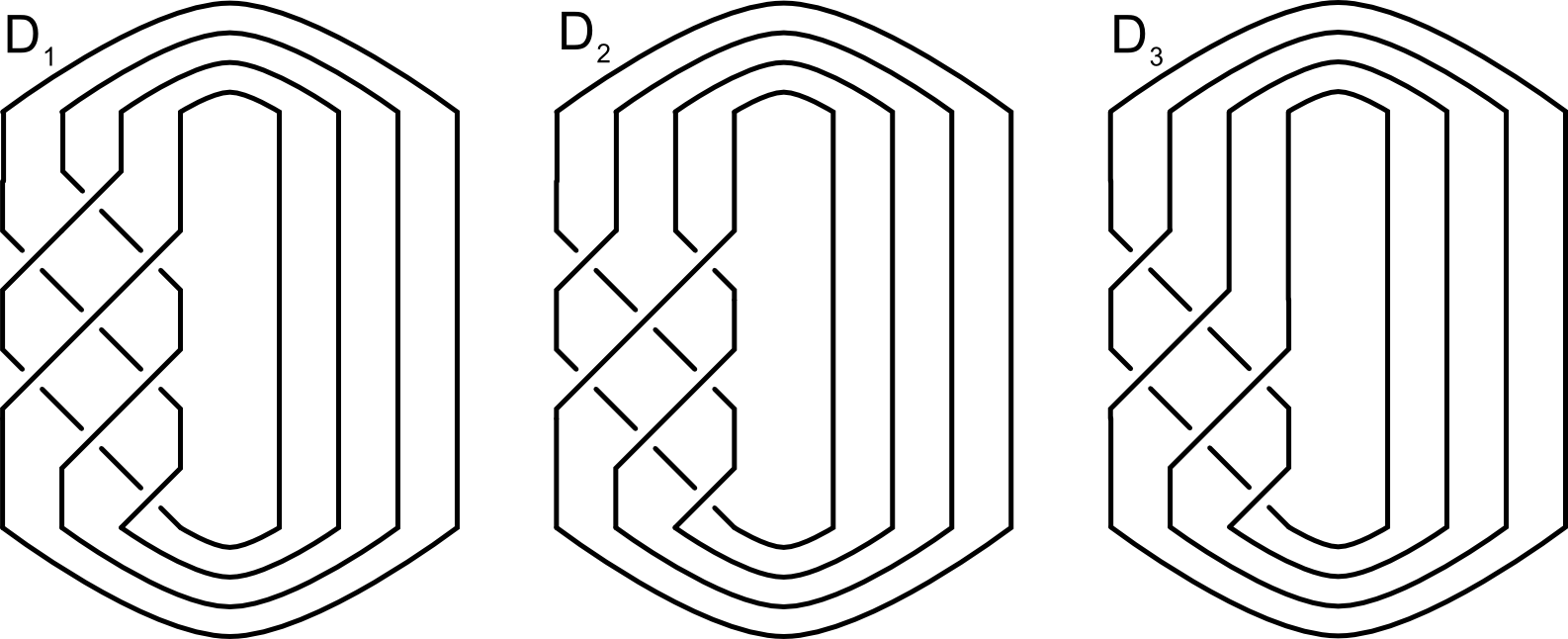}
\\
\vspace{.2in}
\centering
\includegraphics[scale=.2]{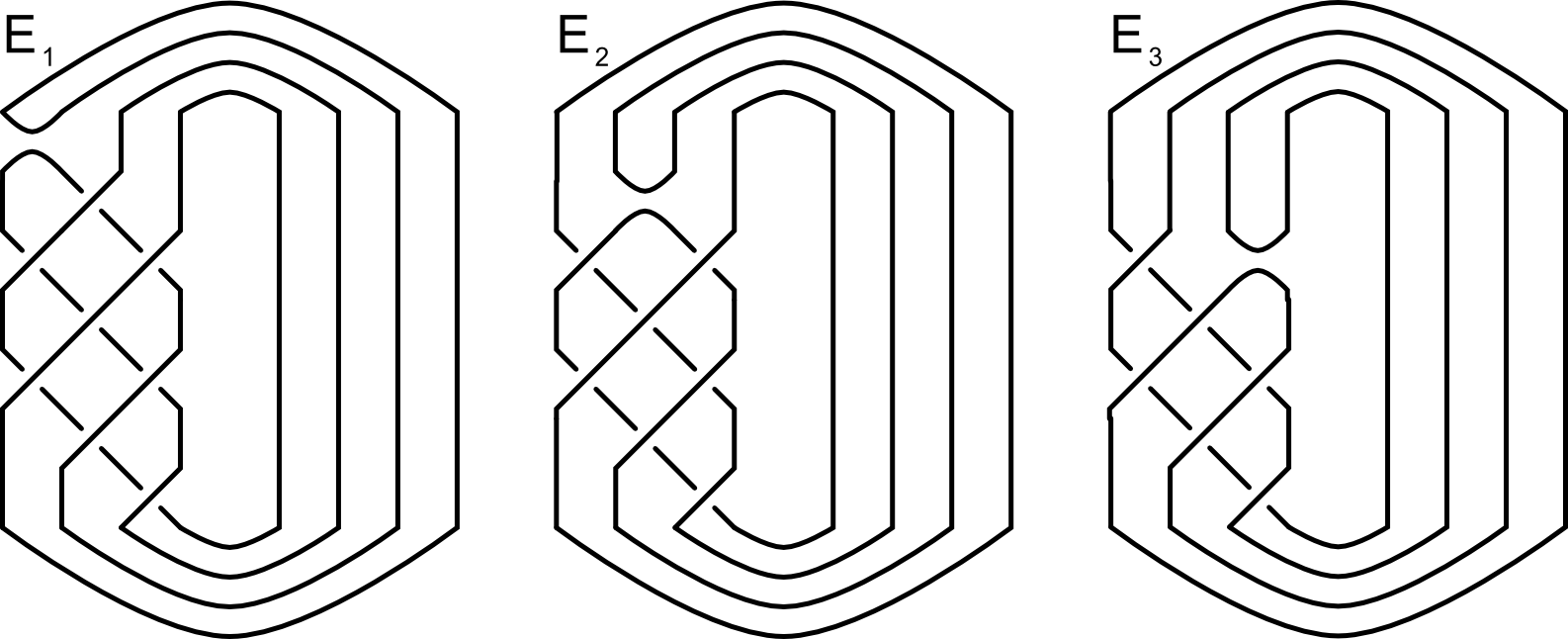}
\caption{Definition \ref{DandE} illustrated for the case $n=4$, $m=2$.  In this case, $D_0=\T^{m+1}=\T^3$, while $D_3=D_{n-1}=\T^m=\T^2$}
\end{figure}

\begin{remark}
\label{Dn-1}
It is clear that, with this notation, $D_{n-1}^m=\T^m$.
\end{remark}

\begin{remark}
\label{DandEomitm}
For the majority of the paper, $m$ will be fixed while handling calculations involving $D_i^m$ and $E_i^m$.  In these cases, the superscript $m$ will be omitted.
\end{remark}

It is clear that in any $D^m_i$ the crossings remain positive.  On the other hand, many of the crossings in $E^m_i$ become negative.  With this in mind, we define:

\begin{definition}
\label{cidefinition}
Let $c^m_i$ denote the number of negative crossings in the diagram $E_i^m$.  As in remark \ref{DandEomitm}, the superscript will usually be omitted when $m$ is fixed as in the majority of the calculations to follow.
\end{definition}

\begin{lemma}[Sto\v{s}i\'{c}]
\label{StosicLemma1}
Fix $m\in\N$ with $m\geq n\geq 3$.  For any $i\in\{1,2,\dots,n-1\}$, the link represented by the diagram $E_i$ is a positive link, and the number of negative crossings satisfies the inequality
\begin{equation}
\label{StosicLemma1eq}
c_i\geq m+n-2
\end{equation}
\end{lemma}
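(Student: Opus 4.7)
The plan is to describe $E_i$ explicitly as a diagram, determine its induced orientation from $\T^{m+1}$, count the resulting negative crossings, and verify that an orientation change makes all crossings positive.

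First I would set up the diagram. Writing $\T^{m+1}=(\sigma_1\sigma_2\cdots\sigma_{n-1})^{m+1}$, the 0-resolutions of the first $i-1$ crossings (in the topmost copy of $\sigma_1\cdots\sigma_{n-1}$) replace $\sigma_1,\ldots,\sigma_{i-1}$ by pairs of vertical identity strands, while the 1-resolution of the $i$-th crossing replaces $\sigma_i$ by a cup-cap pair: a cap on top connecting strand positions $i,i+1$ and a cup on the bottom connecting the same positions. The remaining crossings $\sigma_{i+1},\ldots,\sigma_{n-1}$ in the topmost copy, together with the $m$ lower copies of $\sigma_1\cdots\sigma_{n-1}$, are unchanged.

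Next I would analyze the orientation. Since $\T^{m+1}$ is oriented upward and the cap-cup pair forces two ``upward'' strand-ends to meet head-on, there is a well-defined arc of $E_i$, the \emph{turnback arc}, whose induced orientation runs against that of the original braid. This arc begins on one side of the cap, traverses part of the modified topmost copy, wraps around the braid closure, passes through some of the $m$ lower copies, and ends at the cup. A crossing of $E_i$ has its sign flipped from $+$ to $-$ (relative to $\T^{m+1}$) exactly when exactly one of its two strands lies on the turnback arc.

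To prove $c_i\geq m+n-2$, I would count sign-flips along the turnback arc: each of the $m$ lower copies of $\sigma_1\cdots\sigma_{n-1}$ contributes at least one crossing at which the arc meets a strand it does not share, yielding at least $m$ sign-flips, and the modified topmost copy (via the interaction of the cap-arcs with the $\sigma_{i+1},\ldots,\sigma_{n-1}$ crossings and the pass-through strands) contributes at least $n-2$ more, giving the claimed bound. The assumption $m\geq n\geq 3$ ensures the tracing is long enough for these contributions to be distinct and the combinatorics uniform. Positivity of $E_i$ as a link type then follows by reversing the orientation of those components containing the turnback arc: the previously negative crossings become positive, and a check using the torus-braid combinatorics shows no positive crossing is flipped.

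The main obstacle is the combinatorial bookkeeping in the counting step. Tracking the turnback arc through its passes around the closure requires careful arithmetic, since the number of wraps depends on $i$ and on $\gcd(n,m+1)$, and confirming that no positive crossing is flipped by the orientation change requires a uniform argument over all allowable $(i,m)$. A natural strategy, echoing Sto\v{s}i\'{c}'s approach in \cite{Sto}, is to first analyze the base case $i=1$ (where no 0-resolutions occur) and then extend to general $i$ via a direct comparison between $E_i$ and $E_{i-1}$.
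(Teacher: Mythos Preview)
Your approach diverges from the paper's, and the positivity step contains a real gap.

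The paper does not re-orient the fixed diagram $E_i$; following Sto\v{s}i\'{c}, it performs an isotopy. The turnback created by the 1-resolution is swung around the braid closure and then pulled through the remaining crossings via Reidemeister~2 and Reidemeister~1 moves, producing a genuinely new diagram $E_{i,red}$ which is the closure of a positive braid on fewer than $n$ strands. The inequality $c_i\ge m+n-2$ then falls out by counting how many negative crossings are removed during this pull-through.

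Your orientation-reversal argument cannot establish positivity of the diagram. The turnback arc (from one side of the cap to one side of the cup) is only a sub-arc of its component: that component also contains the complementary arc running from the other side of the cup back to the other side of the cap, and this complementary arc retains the original upward orientation. Reversing the \emph{entire} component leaves the sign of every internal crossing unchanged, so every crossing between the turnback arc and the complementary arc --- and in a torus braid there are many --- remains negative. Concretely, take $n=3$ with $m=3k$: by the calculations in Lemma~\ref{Ein3}, $E_1$ is an unknot diagram with $c_1=4k$ negative crossings among $6k+1$ total; there is only one component, reversing it changes no sign, and no orientation of this fixed diagram is positive. The Reidemeister moves in Sto\v{s}i\'{c}'s argument are thus not a bookkeeping convenience but the actual mechanism that produces a positive diagram.

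Your heuristic for the bound (at least one sign-flip per lower copy, plus $n-2$ from the modified top copy) is in the right spirit and may be salvageable as a direct crossing count, but note that in the Sto\v{s}i\'{c} approach the bound arises immediately from the pull-through and sidesteps the wrap-number and $\gcd(n,m+1)$ complications you correctly anticipate.
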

\begin{proof}
This is precisely Lemma 1 in \cite{Sto}, with slightly altered notation.  Thus our $n$ is Sto\v{s}i\'{c}'s $p$, while our $m$ is his $q-1$ (note the bound $m\geq n$, equivalent in $\Z$ to $m+1>n$ which is the bound $q>p$ in \cite{Sto}).  This last shift is due to the fact that Sto\v{s}i\'{c} models his proof (and thus his notation) on the passage from $\T^{m-1}$ to $\T^m$.  In essence, the proof is simply that the top turnback created in $E_i$ can be `swung around' the closure of the braid, then `pulled' through the rest of the diagram, eliminating all of the negative crossings (the turnback may have to be `swung around' multiple times).  A simple count along the way produces the (very crude) bound. See also Remark 4 in the same paper.  Figure 1 makes this intuitively clear.
\end{proof}

\begin{remark}
\label{StosicBoundn3n2}
In the case $n=3$, we will see that this bound can be sharpened to a precise calculation for the $c_i$, while in the case $n=2$, $E_1$ is simply the unknot.
\end{remark}

With Lemma \ref{StosicLemma1} and Remark \ref{StosicBoundn3n2} in mind, we make the following two definitions.

\begin{definition}
\label{Eireduced}
Let $E_{i,red}^m$ denote the diagram resulting from the diagram $E_i^m$ after eliminating all of the negative crossings as in Lemma \ref{StosicLemma1}.  This will be called the \emph{reduced diagram} for the link represented by $E_i^m$.  As above, the superscript will be omitted in most cases when $m$ is being fixed.
\end{definition}

\begin{remark}
\label{Eorientation}
The proof of Lemma \ref{StosicLemma1} guarantees that there is a choice of orientation for $E_i$ so that $E_{i,red}$ is the closure of a positive braid.  We choose this orientation to complete the definition of $E_i$.
\end{remark}

\begin{definition}
\label{xidefinition}
Let $x_i^m$ denote the number of crossings (all positive) in the reduced diagram $E_{i,red}^m$.  Again, the superscript will normally be omitted.
\end{definition}

\begin{lemma}
\label{xibound}
Fix $m\in\N$.  For any $i\in\{1,2,\dots,n-1\}$, the reduced diagram $E_{i,red}$ satisfies the inequality
\begin{equation}
\label{xiboundeq}
x_i\geq m(n-1)-2c_i
\end{equation}
\end{lemma}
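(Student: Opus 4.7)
The plan is to compute $x_i$ exactly by tracking the crossing count through the reduction procedure of Lemma \ref{StosicLemma1}, and then observe that the resulting formula implies the claimed inequality.

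First, I would count the total number of crossings in $E_i$. The torus braid underlying $\T^{m+1}$ on $n$ strands has $(m+1)(n-1)$ crossings, and forming $E_i$ resolves (either via 0- or 1-resolution) exactly $i$ of them, leaving $(m+1)(n-1) - i$ crossings in total. By Definition \ref{cidefinition}, $c_i$ of these are negative, and the remaining $(m+1)(n-1) - i - c_i$ are positive.

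Next, I would analyze the reduction from $E_i$ to $E_{i,red}$. In Sto\v{s}i\'c's argument recalled in the proof of Lemma \ref{StosicLemma1}, the top turnback is swung around the closure and pulled through the diagram via Reidemeister II moves --- each of which eliminates one positive crossing together with one negative crossing --- possibly interleaved with Reidemeister III slides that preserve crossing counts. Since $E_{i,red}$ has no negative crossings while $E_i$ has $c_i$ of them, exactly $c_i$ Reidemeister II moves must be performed, removing $2c_i$ crossings total. This yields the equality
\[x_i = (m+1)(n-1) - i - 2c_i.\]

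Finally, since $i \in \{1, 2, \ldots, n-1\}$ we have $n-1-i \geq 0$, so
\[x_i = m(n-1) + (n-1-i) - 2c_i \geq m(n-1) - 2c_i,\]
which is the desired inequality. The main point requiring care is the verification that the reduction procedure of Lemma \ref{StosicLemma1} really consists only of Reidemeister II moves (together with auxiliary Reidemeister III slides), with no single-crossing Reidemeister I cancellations that would disrupt the two-to-one pairing between crossings removed and negative crossings eliminated. This should be implicit in the turnback-swinging picture, but it is the point at which I would be most careful when writing out the full proof.
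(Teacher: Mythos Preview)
Your approach is essentially the same as the paper's: introduce the total number of crossings removed during reduction, bound it against $2c_i$, and conclude. The gap lies in your central claim that the reduction consists \emph{only} of Reidemeister~II moves plus Reidemeister~III slides, yielding the \emph{equality} $x_i = (m+1)(n-1) - i - 2c_i$. This is false. The paper's own Lemma~\ref{Ein3} shows that for $n=3$ the turnback passes through each full twist via two Reidemeister~II moves and two (negative) Reidemeister~I moves; plugging those values of $c_i$ and $x_i$ into your formula gives strictly negative numbers, not zero.

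The good news is that negative Reidemeister~I moves only \emph{help}: each removes a single negative crossing, contributing $1$ to the count of crossings removed but $2$ to $2c_i$, so your formula becomes a lower bound rather than an equality, and the inequality survives. The dangerous case is a \emph{positive} Reidemeister~I move, which removes a positive crossing and contributes nothing to $c_i$; the paper handles this by observing that such a move can only occur as the very last step of Sto\v{s}i\'{c}'s procedure and may simply be omitted, leaving the positive kink in $E_{i,red}$. Rather than asserting that no R1 moves occur, the paper's proof defines $y_i$ as the total number of crossings removed and checks move-by-move that $y_i \leq 2c_i$ is preserved by R2, R3, and negative R1, while positive R1 is excluded by convention. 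You should replace your equality claim with this inequality argument; the rest of your write-up then goes through.
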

\begin{proof}
The diagram $E_i$ is formed starting from $\T^{m+1}$, which has $(m+1)(n-1)$ crossings, and taking $i$ resolutions of crossings.  From there, we remove various crossings via Reidemeister moves as in the proof of Lemma \ref{StosicLemma1}.  Let $y_i$ denote the total number of crossings (positive and negative) removed from $E_i$ in this way, so that
\begin{equation}
\label{xiyi}
x_i=(m+1)(n-1)-i-y_i.
\end{equation}
We claim that
\begin{equation}
\label{yiclaim}
y_i\leq 2c_i
\end{equation}
To see this, we consider how each Reidemeister move affects both sides of the inequality.  Reidemeister 3 moves do not change either $c_i$ or $y_i$.  Any Reidemeister 2 move also clearly maintains equality between the two sides.  A negative Reidemeister 1 move maintains the inequality since it increases $y_i$ by one while increasing $2c_i$ by two.  Meanwhile, a positive Reidemeister 1 move would eliminate a `kink' of the form \inlinegfx{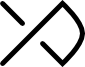} or \inlinegfx{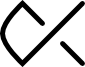}, thus removing the turnback that was being `pulled through' the diagram.  This removal is always the final step in Sto\v{s}i\'{c}'s proof, and as such can be ignored (leaving the positive `kink' in) for the purposes of reducing to a positive diagram.  This proves the claim.

From Equations (\ref{xiyi}) and (\ref{yiclaim}) (and the fact that $i\leq n-1$), we easily obtain
\begin{align*}
x_i&=(m+1)(n-1)-i-y_i\\
&\geq(m+1)(n-1)-(n-1)-2c_i\\
&\geq m(n-1)-2c_i.
\end{align*}
\end{proof}

In order to use Lemma \ref{Lemma3.32}, we will need a bound on the minimal $q$-degree of non-zero Khovanov homology for the subcomplexes corresponding to the $E_i^m$.

\begin{definition}
Let $\min_q(E^m_i)$ denote the minimal $q$-degree of non-zero Khovanov homology for the link represented by $E_i^m$.
\end{definition}

\begin{lemma}
\label{minqbound}
Fix $m\in\Z$ such that $m\geq n\geq 3$.  For any $i\in\{1,2,\dots,n-1\}$, we have
\begin{equation}
\label{minqboundeq}
\min_q(E_i)\geq -n+1+m(n-1)-2c_i
\end{equation}
\end{lemma}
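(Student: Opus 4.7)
The plan is to reduce the bound on $\min_q(E_i)$ to the standard minimum $q$-degree estimate for positive link diagrams. By Lemma \ref{StosicLemma1} and Remark \ref{Eorientation}, the diagram $E_{i,red}$ is positive with $x_i$ crossings and represents the same link as $E_i$, so by invariance of Khovanov homology under Reidemeister moves, $\min_q(E_i) = \min_q(E_{i,red})$.

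I would then establish the following general lower bound: for any positive link diagram $D$ with $c$ positive crossings and $s$ Seifert circles, every generator of $KC(D)$ lies in $q$-degree at least $c-s$. This is immediate from equation~\eqref{qdeg}: a generator living at a cube vertex with $r$ 1-resolutions and $\ell$ circles, of which $k$ are labeled $v_-$, satisfies
\begin{equation*}
\deg_q = r + c + \ell - 2k \geq r + c - \ell \geq c - s,
\end{equation*}
where the first inequality uses $k \leq \ell$ and the second uses that each 1-resolution changes $\ell$ by exactly $\pm 1$ away from the all-zero (Seifert) resolution's circle count $s$, so $\ell \leq s + r$.

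The crux of the argument is to verify that $s(E_{i,red}) \leq n-1$. Stosic's reduction in the proof of Lemma \ref{StosicLemma1} (swinging the turnback around the braid closure and pulling it through the diagram via Reidemeister 2, 3, and negative Reidemeister 1 moves) represents $E_{i,red}$ as a positive braid closure whose total strand count, even after accounting for the positive kinks left in by ignoring positive Reidemeister 1 moves, is at most $n-1$: the single 1-resolution in $E_i$ topologically joins two of the $n$ original braid strands into one, and no subsequent isotopy in Sto\v{s}i\'{c}'s reduction can raise the resulting strand count above $n-1$. Granted this, $\min_q(E_i) = \min_q(E_{i,red}) \geq x_i - (n-1)$, and substituting the bound $x_i \geq m(n-1) - 2c_i$ from Lemma \ref{xibound} yields $\min_q(E_i) \geq -n + 1 + m(n-1) - 2c_i$, as required. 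The main obstacle is the Seifert-circle accounting for $E_{i,red}$, which requires careful bookkeeping of how each step in Sto\v{s}i\'{c}'s reduction affects the Seifert structure — in particular, confirming that the positive kinks retained in $E_{i,red}$ do not force the effective strand count above $n-1$.
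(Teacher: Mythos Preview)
Your approach is essentially the paper's: reduce to $E_{i,red}$, bound the minimum $q$-degree of a positive diagram by (crossings) $-$ (circles in the all-zero resolution), show that this circle count is at most $n-1$, and then invoke Lemma~\ref{xibound}. Your derivation of the bound $\deg_q \geq c - s$ is in fact a bit more careful than the paper's, which simply asserts that the minimum $q$-degree is realized by the all-$v_-$ generator in the all-zero resolution.

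The one place the arguments diverge is exactly the step you flagged as the main obstacle: bounding the Seifert-circle count of $E_{i,red}$ by $n-1$. Your ``strand count'' heuristic is on the right track but, as you note, leaves the bookkeeping for the retained positive kink somewhat unresolved (a positive kink does add a Seifert circle). The paper sidesteps this by counting local maxima/minima of the height function rather than Seifert circles: each circle in the all-zero resolution contains at least one max/min pair, so it suffices to bound the number of such pairs. Sto\v{s}i\'{c}'s reduction exhibits $E_{i,red}$ as the closure of a positive $(\ell,\ell)$-tangle with $\ell\leq n-2$, contributing $\ell$ max/min pairs from the closure arcs, plus at most one additional pair from the original 1-resolution turnback; no Reidemeister move in the reduction creates new maxima or minima. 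This gives at most $n-1$ pairs and hence at most $n-1$ circles, which is exactly the bound you need. Substituting this for your Seifert-circle argument makes the proof complete.
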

\begin{proof}
First, since Khovanov homology is a graded link invariant, we have that $\min_q(E_i)=\min_q(E_{i,red})$.  Furthermore, the `swinging around' of the top turnback in the proof of Lemma \ref{StosicLemma1} shows that $E_{i,red}$ is the closure of a positive $(\ell,\ell)$-tangle, with $\ell\leq n-2$ (see Figure 1).

Now the minimal $q$-degree of the entire Khovanov chain complex of a link is always achieved by the generator that corresponds to choosing $v_-$ for each circle in the all-zero resolution.  Denote this element $z$.  We have $\deg_q(z)=-$(number of circles) + (normalization).  Each circle must contain at least one max and one min, so in fact 
\[\deg_q(z)\geq-\text{(number of min/max pairs in all-zero resolution) + (normalization)}.\]
For our $E_{i,red}$ we have $\ell$ $(\leq n-2)$ min/max pairs coming from closing the tangle, as well as potentially one more pair coming from the 1-resolution we started with in $E_i$ (no Reidemeister moves in Sto\v{s}i\'{c}'s argument can create more maxs or mins).  The normalization is just the number of positive crossings, that is, $x_i$.  Thus we have
\begin{align*}
\deg_q(z) &\geq -(\ell + 1)+x_i\\
&\geq -((n-2)+1)+(x_i)\\
&\geq -n+1+x_i\\
&\geq -n+1+m(n-1)-2c_i
\end{align*}
where the final line follows from Equation (\ref{xiboundeq}).  Then since $\deg_q(z)$ is as low as possible for the entire Khovanov chain complex, we have $\min_q(E_i)=\min_q(E_{i,red})\geq\deg_q(z)$.
\end{proof}
\begin{remark}
\label{minqboundrmk}
As in Remark \ref{StosicBoundn3n2}, we will see later that the case $n=3$ allows this bound to be sharpened to a precise calculation, while the case $n=2$ gives $\min_q(E_1)=\min_q(U)=-1$.
\end{remark}

\subsection{The Main Strategy}

We now explain the overall strategy in more detail.  Having fixed a specific $q$-degree $a$, we consider $KC^{a+n-1}(\T^{m+1})$ for some value of $m$.  We consider the chain map induced by resolving only the `first' crossing as a 0 or 1 resolution, and so view our cubical complex as
\[KC^{a+n-1}(\T^{m+1}) = \left(KC^{\text{shifted q-degree}}(D_1) \longrightarrow KC^{\text{shifted q-degree}}(E_1)\right).\]
Similarly, we consider the same process applied to $D_1$, $D_2$, and so forth up to $D_{n-2}$.  Recalling the convention that $D_0=\T^{m+1}$, we can write for $i\in\{1,2,\dots,n-1\}$:
\[KC^{\text{shifted q-degree}}(D_{i-1}) = \left(KC^{\text{shifted q-degree}}(D_{i}) \longrightarrow KC^{\text{shifted q-degree}}(E_{i})\right).\]

If we can show that all of the $KC^{\text{shifted q-degree}}(E_i)$ are acyclic, we can use Lemma \ref{Lemma3.32} to collapse them all one at a time, and the result will follow.  To do this, we first consider the shifting of the $q$ degrees.

\begin{definition}
\label{aialphai}
Fix $m\in\N$ and $a\in\Z$.  Define $a_0:=a+n-1$, and then define $a_i$ and $\alpha_i$ inductively so that, for all $i\in\{1,2,\dots,n-1\}$,
\[KC^{a_{i-1}}(D_{i-1}) = \left(KC^{a_i}(D_{i}) \longrightarrow KC^{\alpha_i}(E_{i})\right).\]
\end{definition}
Definition \ref{aialphai} says that any generator $u\in KC^{a+n-1}(\T^{m+1})=KC^{a_0}(D_0)$ that can be viewed as lying in some $KC(D_i)$ will have $\deg_q(u)=a_i$ when viewed as a generator in $KC(D_i)$.  Similarly any generator $w\in KC^{a+n-1}(\T^{m+1})$ that can be viewed as lying in some $KC(E_i)$ will have $\deg_q(w)=\alpha_i$ when viewed as a generator in $KC(E_i)$.

\begin{lemma}
\label{relateaalpha}
Fix $m\in\N$.  For all $i\in\{1,2,\dots,n-1\}$, we have
\begin{equation}
\label{aitoa}
a_i=a+n-1-i
\end{equation}
and
\begin{equation}
\label{alphaitoa}
\alpha_i=a+n-2-i-3c_i
\end{equation}
\end{lemma}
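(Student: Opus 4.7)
The plan is to derive both identities by tracking how the $q$-degree of a fixed generator changes when it is reinterpreted as an element of a different link diagram's Khovanov complex, reading off the shifts directly from Equations \eqref{hdeg} and \eqref{qdeg}. Equation \eqref{aitoa} is a routine iterated shift of $-1$ per crossing 0-resolved, while Equation \eqref{alphaitoa} requires a one-step computation with shift $-2-3c_i$ that carefully accounts for the orientation of $E_i$ chosen in Remark \ref{Eorientation}.

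For Equation \eqref{aitoa}, I would induct on $i$ from the base case $a_0 = a + n - 1$ given in Definition \ref{aialphai}. Passing from $D_{i-1}$ to $D_i$ is the 0-resolution of one positive crossing; since $D_{i-1}$ and $D_i$ both consist entirely of positive crossings (they are obtained from $\T^{m+1}$ by 0-resolutions), we have $n_-(D_{i-1}) = n_-(D_i) = 0$ and $n_+(D_i) = n_+(D_{i-1}) - 1$. For a state whose $i$-th crossing is 0-resolved, the number of 1-resolutions is unchanged when the state is viewed on $D_i$ instead of $D_{i-1}$, so $\deg_h$ is unchanged by Equation \eqref{hdeg}, while the normalization term $n_+ - n_-$ in Equation \eqref{qdeg} drops by $1$. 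Hence $a_i = a_{i-1} - 1$, and iterating gives $a_i = a + n - 1 - i$.

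For Equation \eqref{alphaitoa}, I would take a generator $w$ in $KC(D_{i-1})$ whose $i$-th crossing is 1-resolved and compute its $q$-degree twice: once with respect to $D_{i-1}$ (where $n_-(D_{i-1}) = 0$ and $n_+(D_{i-1}) = (m+1)(n-1) - (i-1)$) and once with respect to $E_i$ (where the orientation of Remark \ref{Eorientation} together with Definition \ref{cidefinition} gives $n_-(E_i) = c_i$ and $n_+(E_i) = (m+1)(n-1) - i - c_i$). When $w$ is viewed on $E_i$, the number of 1-resolutions drops by $1$ since the $i$-th crossing is no longer present, so Equation \eqref{hdeg} tells us $\deg_h$ decreases by $1 + c_i$; the normalization term $n_+ - n_-$ decreases by $(1 + c_i) + c_i = 1 + 2c_i$; and $\#v_+ - \#v_-$ is intrinsic to the generator. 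Summing via Equation \eqref{qdeg} yields a total shift of $-2 - 3c_i$, giving $\alpha_i = a_{i-1} - 2 - 3c_i$, and substituting the formula for $a_{i-1}$ just established gives $\alpha_i = a + n - 2 - i - 3c_i$.

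The only point demanding real care is the orientation bookkeeping for $E_i$: the crossings inherited from $D_{i-1}$ are all positive with respect to $D_{i-1}$'s orientation, but the reorientation dictated by Remark \ref{Eorientation} converts $c_i$ of them to negative crossings, and it is this orientation change that contributes an additional $-c_i$ to $\deg_h$ and an additional $-2c_i$ to $n_+ - n_-$ (beyond the single $-1$'s coming from simply removing the resolved crossing), producing the full $-3c_i$ contribution to the $q$-shift.
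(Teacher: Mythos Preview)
Your proposal is correct and follows essentially the same approach as the paper: both arguments read the shifts directly from Equations \eqref{hdeg} and \eqref{qdeg}, attributing $-1$ per lost positive crossing for $a_i$ and an additional $-1$ (homological shift from the 1-resolution) plus $-3c_i$ (each sign-flipped crossing contributing $-1$ to $n_+$ and $-2$ via the new $n_-$) for $\alpha_i$. The only cosmetic difference is that the paper phrases the $\alpha_i$ computation as a shift from $a_i$ (yielding $\alpha_i=a_i-1-3c_i$) whereas you compute the shift from $a_{i-1}$ (yielding $\alpha_i=a_{i-1}-2-3c_i$); these are of course equivalent.
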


\begin{proof}
Recall Equations (\ref{hdeg}) and (\ref{qdeg}) in the following arguments.  Equation (\ref{aitoa}) is clear, since the diagram $D_i$ keeps all of the remaining crossings positive, and does not shift homological degree.  Thus the only shift comes from the loss of $i$ positive crossings, due to resolving them.

Equation (\ref{alphaitoa}) follows from Equation (\ref{aitoa}) by shifting one additional negative degree due to the homological shift since $E_i$ is formed from a 1-resolution, as well as the normalization shift whereby every positive crossing that switched to a negative crossing shifts the $q$-degree by -3 (-1 for the loss of a positive crossing, and -2 more for the gaining of a negative crossing).
\end{proof}

Before proceeding to prove the main result, we give a detailed example to illustrate all of these notations, the lemma, and the main strategy described above.

\begin{example}
\label{bigexample}
We fix $n=3$ and $m=1$ (so $m+1=2$), the simplest possible case to consider.  We also choose to fix $a=3$, so that $a_0=3+3-1=5$.

We begin with $D_0=\T^2$, the (3,2)-torus knot pictured in Figure 2, with crossings numbered from bottom to top for convenience.

\begin{figure}[h]
\centering
\includegraphics[scale=.28]{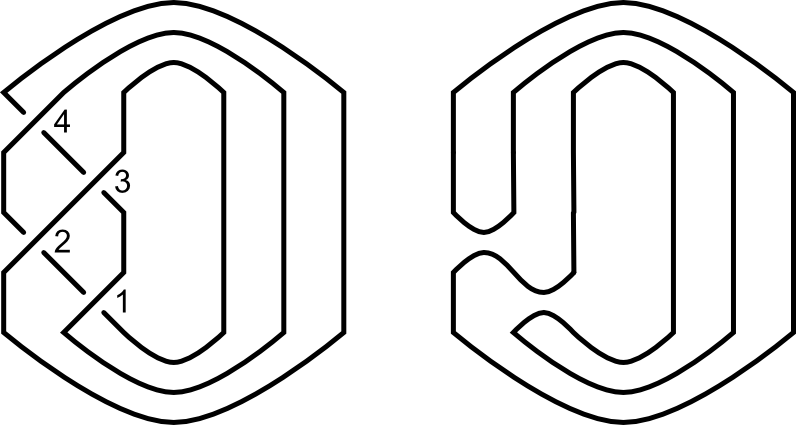}
\caption{On the left is $D_0$, with crossings numbered; on the right is the 1100 resolution}
\end{figure}

We consider the 1100-resolution, also included in the figure.  This resolution clearly gives a single circle, and we let $x$ denote the generator corresponding to $v_-$ assigned to this circle.  We will consider this generator $x$ and its image under various components of the differential.

First, we note that $KC(D_1)$ can be viewed as the portion of $KC(D_0)$ comprising resolutions of the form xxx0, while $KC(E_1)$ is the portion comprising resolutions of the form xxx1 (all up to a shift in $q$-grading, which we will address shortly via $a_1$ and $\alpha_1$).  If we view $x\in \left(KC(D_1)\longrightarrow KC(E_1)\right)$, then its image in $KC(E_1)$ lands in the resolution 1101.  We show the diagrams $D_1$ and $E_1$ in Figure 3, as well as the 1100 and 1101 resolutions.

\begin{figure}[h]
\centering
\includegraphics[scale=.28]{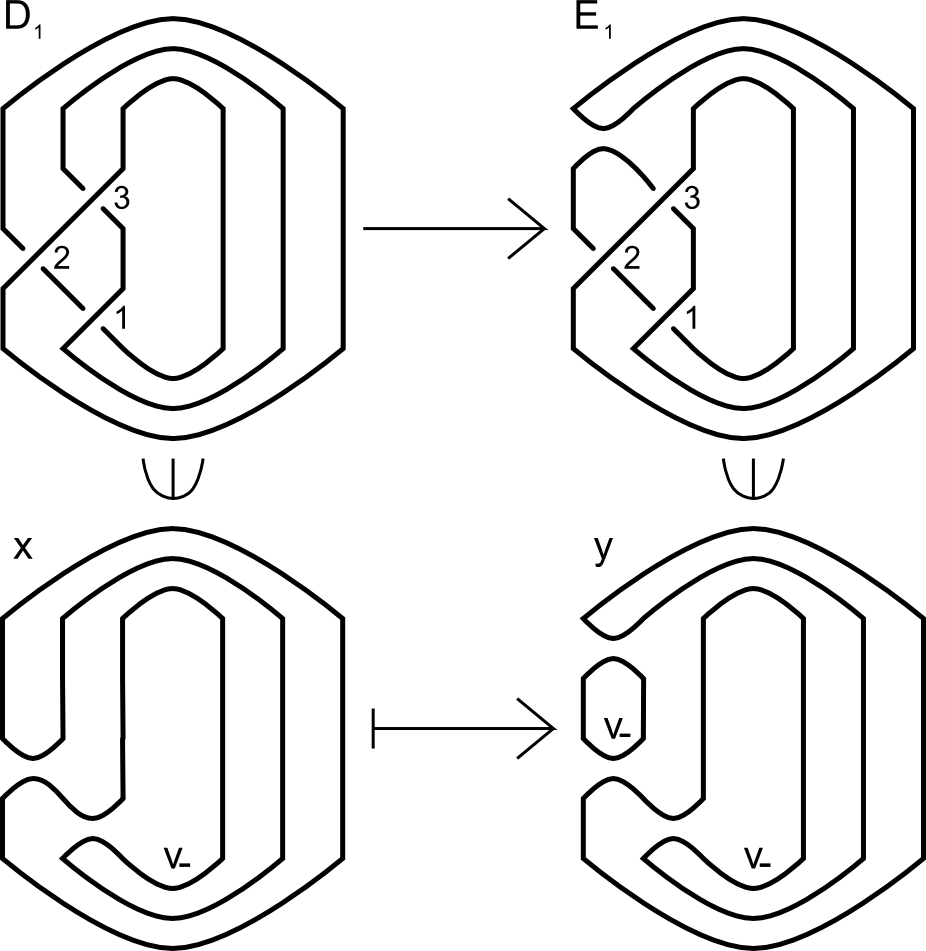}
\caption{The complex viewed diagrammatically as $D_1\longrightarrow E_1$, with the image of $x$ under this component of the differential}
\end{figure}

Recalling the definition of the Khovanov differential, we see that the image of $x$ in this 1101 resolution is the generator $v_-\otimes v_-$.  We denote this generator by $y$, and consider the various grading shifts using Equations (\ref{hdeg}) and (\ref{qdeg}).

First we concentrate on the generator $x$.  If we view $x\in KC(D_0)$, we would consider it as living in the 1100-resolution of a knot diagram with 4 positive crossings and 0 negative crossings, finding $\deg_h(x)=2$ and $\deg_q(x)=2+(4-0)+(0-1)=5$.  Thus, $a_0=5$ (as desired if we wish to fix $a=3$).  Meanwhile, if we view $x\in KC(D_1)$, we would consider it as living in the 110-resolution of a diagram with 3 positive crossings and 0 negative crossings (since the 4th crossing is already resolved), finding $\deg_h(x)=2$ and $\deg_q(x)=2+(3-0)+(0-1)=4$.  Using superscripts to indicate what diagram we are considering, we can summarize as:
\begin{gather*}
\deg_h^{D_0}(x) = 2 = \deg_h^{D_1}(x)\\
a_0=\deg_q^{D_0}(x) = 5\\
a_1=\deg_q^{D_1}(x) = 4
\end{gather*}
Next we concentrate on the generator $y$.  If we view $y\in KC(D_0)$, we would consider it as living in the 1101-resolution of a diagram with 4 positive crossings and 0 negative crossings, finding $\deg_h^{D_0}(y)=3$ and $\deg_q^{D_0}(y)=3+(4-0)+(0-2)=5$, as it should be since the differential viewed entirely in $KC(D_0)$ preserves $q$-grading.  However, if we view $y\in KC(E_1)$, we would consider it as living in the 110-resolution of a diagram with 1 positive crossing and 2 negative crossings, finding $\deg_h^{E_1}(y)=2-2=0$ and $\deg_q^{E_1}(y)=0+(1-2)+(0-2)=-3$.  Summarizing, we have:
\begin{gather*}
\deg_h^{D_0}(y) = 3\\
\deg_h^{E_1}(y) = 0\\
\deg_q^{D_0}(y) = 5\\
\alpha_1=\deg_q^{E_1}(y) = -3
\end{gather*}
The reader can see that this calculation confirms Formula (\ref{alphaitoa}) for $\alpha_1$, noting that here $c_1=2$.  The specific generator $x$ was immaterial; any other generator with same $q$-degree would have followed the same type of calculation.  The key aspect of the strategy outlined at the beginning of this section is that, having fixed $a=3$ so that $a_0=5$, we find $\alpha_1=-3$.  But we can also see that $E_1$ is a diagram for the unknot, which has non-zero homology only in $q$-degree $\pm 1$.  So when $a=3$, the `$E_1$ half' of the complex for $\T^2$ must be acyclic, and can be collapsed.

The next step would be to consider the complex $KC(D_1)$ as $\big(KC(D_2)\longrightarrow KC(E_2)\big)$.  Since $D_2$ is formed by taking the 0-resolution of the top two crossings, we can view $x\in KC(D_2)$ as well.  Its image $w\in KC(E_2)$ lands in the 1110-resolution from the point of view of the diagram $D_0$.  The reader can check that a similar calculation to the one above gives:
\begin{gather*}
\deg_h^{D_0}(x) = 2 = \deg_h^{D_2}(x)\\
a_0=\deg_q^{D_0}(x) = 5\\
a_2=\deg_q^{D_2}(x) = 3 = a \hspace{.1in} (\text{note }n-1=2)\\
\deg_h^{D_0}(w) = 3\\
\deg_h^{E_2}(w) = 1\\
\deg_q^{D_0}(w) = 5\\
\alpha_2=\deg_q^{E_2}(w) = -1\\
c_2=1
\end{gather*}
Once again, the diagram $E_2$ can be seen to be depicting the unknot (this will not be true in full generality, see Lemma \ref{StosicLemma1}).  Unfortunately, the unknot does have non-zero homology in $q$-degree $-1$.  Which means that the portion of $KC^4(D_1)$ corresponding to $KC^{-1}(E_2)$ is not acyclic, and we would not be able to collapse it.  In this example, we are left with no helpful conclusion relating $\chi^5(D_0)=\chi^5(\T^2)$ and $\chi^3(D_2)=\chi^3(\T^1)$.
\end{example}

\section{The Main Result For $n\geq 4$}

In Example \ref{bigexample} at the end of the previous section we came up empty-handed when trying to compare $\chi^{a+n-1}(\T^{m+1})$ to $\chi^a(\T^m)$.  The problem was that, given $a=3$ in that example, our choice of $m=1$ was too small to ensure that all of the $E_i$ could be collapsed.  We now use the formulas and our bounding lemmas produced in Section 3 to state and prove the key theorem which will lead us to the main result.  Throughout this section, $a\in\Z$ will denote a fixed $q$-degree, and $n\in\N$ is fixed with $n\geq 4$.

\begin{theorem}
\label{mainthm}
Fix $a\in\Z$ and $n\in\N$.  Define $f(a,n)=\max(\frac{a+n-1}{n},n)$.  Then for any $m\geq f(a,n)$, and for any $i\in\{1,2,\dots,n-1\}$, we have
\[\chi^{a+n-1}(\T^{m+1})\simeq\chi^{a_i}(D_i).\]
In particular, when $i=n-1$, we have that $a_i=a$ and $D_{n-1}=\T^m$, so that
\[\chi^{a+n-1}(\T^{m+1})\simeq\chi^{a}(\T^m).\]
\end{theorem}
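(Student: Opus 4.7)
The plan is to proceed by finite induction on $i$ from $0$ to $n-1$, at each step collapsing one of the ``$E_i$ halves'' of the chain complex. The base case $i=0$ is trivial since $D_0=\T^{m+1}$ and $a_0=a+n-1$ by definition. For the inductive step, Definition \ref{aialphai} splits the chain complex as
\[
KC^{a_{i-1}}(D_{i-1}) = \big(KC^{a_i}(D_i) \longrightarrow KC^{\alpha_i}(E_i)\big),
\]
and by Lemma \ref{Lemma3.32}, establishing acyclicity of $KC^{\alpha_i}(E_i)$ will yield $\chi^{a_{i-1}}(D_{i-1}) \simeq \chi^{a_i}(D_i)$. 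Thus the proof reduces to finding a single bound on $m$ (depending only on $a$ and $n$) that forces every $KC^{\alpha_i}(E_i)$ to be acyclic for $i \in \{1,\dots,n-1\}$.

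To achieve acyclicity at level $i$, it suffices to show $\alpha_i < \min_q(E_i)$, since then the entire $q$-degree $\alpha_i$ slice sits strictly below the minimum $q$-degree supporting nonzero homology. Substituting $\alpha_i = a+n-2-i-3c_i$ from Lemma \ref{relateaalpha} and the bound $\min_q(E_i) \geq -n+1+m(n-1)-2c_i$ from Lemma \ref{minqbound}, this inequality simplifies to
\[
a + 2n - 3 - i - c_i < m(n-1).
\]
Applying Sto\v{s}i\'{c}'s bound $c_i \geq m+n-2$ from Lemma \ref{StosicLemma1} to absorb the remaining $-c_i$, it is sufficient to have $a+n-1-i-m < m(n-1)$, i.e.\ $mn > a+n-1-i$. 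Since $i \geq 1$, the tightest case is $i=1$, giving the requirement $mn \geq a+n-1$, equivalently $m \geq \tfrac{a+n-1}{n}$. Together with the hypothesis $m \geq n$ needed to invoke Lemma \ref{StosicLemma1}, this produces exactly the bound $m \geq f(a,n)=\max(\tfrac{a+n-1}{n},n)$.

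Chaining the resulting equivalences $\chi^{a_{i-1}}(D_{i-1}) \simeq \chi^{a_i}(D_i)$ yields $\chi^{a+n-1}(\T^{m+1}) \simeq \chi^{a_i}(D_i)$ for every $i \in \{1,\dots,n-1\}$. Setting $i = n-1$ and invoking Remark \ref{Dn-1} together with $a_{n-1}=a$ from Lemma \ref{relateaalpha} delivers the final statement $\chi^{a+n-1}(\T^{m+1}) \simeq \chi^a(\T^m)$.

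The main obstacle is purely one of bookkeeping: one must check that a \emph{single} uniform $m$ handles the acyclicity of all $n-1$ complexes $KC^{\alpha_i}(E_i)$ simultaneously, even though both $\alpha_i$ and the bound on $\min_q(E_i)$ depend on $c_i$, which in turn depends on both $i$ and $m$. What rescues the argument is a fortunate cancellation: the $-3c_i$ in $\alpha_i$ and the $-2c_i$ in the bound on $\min_q(E_i)$ combine to leave only a single $-c_i$, which is exactly tame enough to be absorbed by the linear lower bound $c_i \geq m+n-2$, yielding a final constraint on $m$ that is independent of $i$.
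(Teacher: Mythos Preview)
Your proposal is correct and follows essentially the same route as the paper's proof: split $KC^{a_{i-1}}(D_{i-1})$ via Definition~\ref{aialphai}, reduce to showing $\alpha_i<\min_q(E_i)$, then combine Lemmas~\ref{relateaalpha}, \ref{minqbound}, and \ref{StosicLemma1} to land on the bound $mn\geq a+n-1$. The only cosmetic difference is that you derive the sufficient condition by working backward from the target inequality, whereas the paper starts from $m\geq f(a,n)$ and pushes forward; the algebra and the cancellation you highlight (the $-3c_i$ against the $-2c_i$ leaving one $-c_i$ to be absorbed by $c_i\geq m+n-2$) are identical.
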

\begin{remark}
The constant term $n$ is present in $f(a,n)$ solely to ensure that the bounding lemmas from section 3 can be used.  This will not be commented upon further within the proof.
\end{remark}
\begin{proof}
We show that, for any $i\in\{1,2,\dots,n-1\}$, we have $\chi^{a_{i-1}}(D_{i-1})\simeq\chi^{a_i}(D_i)$.  The statement of the theorem clearly follows, since $D_0=\T^{m+1}$.  We use the strategy described in the previous section.  That is, we view the Khovanov chain complex for $D_{i-1}$ as
\[KC^{a_{i-1}}(D_{i-1}) = \big(KC^{a_i}(D_i) \longrightarrow KC^{\alpha_i}(E_i)\big)\]
and we set out to prove that, for any such $i$, the subcomplex $KC^{\alpha_i}(E_i)$ is acyclic.  Lemma \ref{Lemma3.32} then gives us the desired stable homotopy equivalence.  To prove that the subcomplex is acyclic, we will show that our assumption on $m$ forces $\alpha_i<\min_q(E_i)$.

We begin from our assumption, fixing $m\geq f(a,n)$ to deduce
\begin{align*}
m &\geq \frac{a+n-1}{n}\\
mn &\geq a+n-1\\
mn-m &\geq a+n-1-m\\
m(n-1) &\geq a+n-1-m+n-n+2-2\\
m(n-1) &\geq a+2n-3-(m+n-2)
\end{align*}
From this we can use Equation (\ref{StosicLemma1eq}) to conclude
\begin{equation}
\label{mainthmeq1}
m(n-1) \geq a+2n-3-c_i.
\end{equation}

Next, we start from our earlier bound (Equation (\ref{minqboundeq})) and apply Equation (\ref{mainthmeq1}) to achieve the desired result:
\begin{align*}
\min_q(E_i) &\geq -n+1+m(n-1)-2c_i\\
&\geq -n+1+a+2n-3-c_i-2c_i\\
&\geq a+n-2-3c_i\\
&> a+n-2-i-3c_i\\
&> \alpha_i.
\end{align*}
Here the last line follows from Equation (\ref{alphaitoa}), while the second to last line gives the strict inequality since $i\geq 1$.
\end{proof}

This theorem shows that, given $q$-degree $a$ and fixed number of strands $n\geq 4$, we can find $M$ (any integer greater than $f(a,n)$) so that $\chi^{a+n-1}(\T^{m+1})\simeq\chi^a(\T^m)$, for any $m\geq M$.  This can be regarded as the $k=1$ case of Theorem \ref{finalresult} (when $n\geq 4$).  In order to complete the proof for all $k$, we need to ensure that the condition $m\geq f(a,n)$ is preserved as $m$ and $a$ grow.

\begin{lemma}
\label{mainlemma}
Using the notations of Theorem \ref{mainthm}, we have:
\begin{equation}
\label{mainlemmaeq}
m\geq f(a,n) \Longrightarrow (m+1)\geq f( (a+n-1), n).
\end{equation}
\end{lemma}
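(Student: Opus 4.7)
The plan is essentially bookkeeping with the definition of $f$, separating the $\max$ into its two arguments. Unpacking the conclusion, I need to verify $m+1 \geq f(a+n-1, n) = \max\!\left(\frac{(a+n-1)+n-1}{n},\, n\right) = \max\!\left(\frac{a+2n-2}{n},\, n\right)$, which amounts to two separate inequalities.

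For the constant term, the hypothesis $m \geq f(a,n)$ gives in particular $m \geq n$, so $m+1 \geq n+1 \geq n$, handling this case immediately. For the linear term, the hypothesis also gives $m \geq \frac{a+n-1}{n}$, and adding $1$ to both sides produces
\[
m+1 \;\geq\; \frac{a+n-1}{n} + 1 \;=\; \frac{a+2n-1}{n} \;\geq\; \frac{a+2n-2}{n},
\]
which is exactly what is needed. Combining both bounds yields $m+1 \geq f(a+n-1, n)$.

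There is no real obstacle here; the lemma is a compatibility check confirming that the threshold function $f(a,n)$ grows slowly enough in $a$ (specifically, by at most $\frac{n-1}{n} < 1$ when $a$ increases by $n-1$) that each unit increase in $m$ can absorb the corresponding shift when iterating Theorem \ref{mainthm} from $(m,a)$ to $(m+1, a+n-1)$. This is precisely what will be needed in the next step to promote the $k=1$ statement to all $k \in \N$ by induction.
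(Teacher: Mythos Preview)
Your proof is correct and follows essentially the same approach as the paper: split the $\max$ into its two arguments, note that the constant term $n$ is trivially preserved, and verify the linear term by adding $1$ to both sides of $m\geq\frac{a+n-1}{n}$. The paper's proof is in fact even terser than yours, observing only that the constant part is automatic and leaving the linear inequality to the reader.
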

\begin{proof}
Since $f(a,n)$ is a maximum of two quantities, one of which is constant, it is enough to check that
\[m\geq\frac{a+n-1}{n} \Longrightarrow (m+1)\geq\frac{(a+n-1)+n-1}{n}\]
which the reader may easily verify.
\end{proof}

\begin{proof}
(\emph{of Theorem \ref{finalresult} for $n\geq 4$}) We have the case $k=1$ from Theorem \ref{mainthm}.  Lemma \ref{mainlemma} allows us to induct on $k$ and show that, for any $k\in\N$,
\[m\geq f(a,n) \Longrightarrow m+k\geq f(a+k(n-1),n)\]
and thus Theorem \ref{mainthm} allow us to conclude
\[\chi^{a+(k+1)(n-1)}(\T^{m+k+1}) \simeq \chi^{a+k(n-1)}(\T^{m+k}).\]
The result follows.
\end{proof}

\section{The Main Result For $n=2$}
As noted in Remarks \ref{StosicBoundn3n2} and \ref{minqboundrmk}, the bounding lemmas used in the previous section simplify vastly in the case $n=2$.  We continue to use the notations $D_i$ and $E_i$ established in Definition \ref{DandE}, $c_i$ and $x_i$ as in Definitions \ref{cidefinition} and \ref{xidefinition}, and $a_i$ and $\alpha_i$ as in Definition \ref{aialphai}.  However, in the case $n=2$, it is clear that $D_0=\T^{m+1}$ and $D_1=\T^m$, while $E_1$ is a diagram for the unknot with all negative twists (see the figure below).  This allows us to `skip' the bounding lemmas and conclude immediately that, for any fixed $q$-degree $a\in\Z$,
\begin{gather*}
a_0=a+1\\
a_1=a\\
c_1=m\\
x_1=0\\
\min_q(E_1)=-1.
\end{gather*}
\begin{figure}[h]
\centering
\includegraphics[scale=.28]{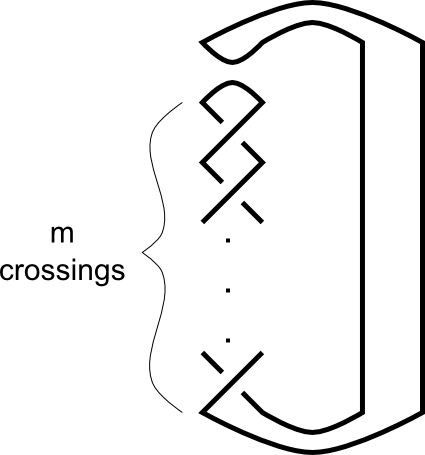}
\caption{The diagram for $E_1$ in the case $n=2$, clearly an unknot}
\end{figure}

We also see that Formula (\ref{alphaitoa}) continues to hold in the case $n=2$, allowing us to conclude
\[\alpha_1=a-3m-1.\]
Thus we have the following theorem:
\begin{theorem}
\label{n=2case}
Fix $a\in\Z$.  For $n=2$ and for any $m>\frac{a}{3}$, we have $\chi^{a+k}(\T^{m+k})\simeq\chi^{a}(\T^m)$ for all $k\in\N$.
\end{theorem}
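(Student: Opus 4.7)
The plan is to mirror the proof strategy used for $n \geq 4$, but with the explicit values established for $n=2$ replacing the bounding lemmas from Section 3. The simplification is substantial because when $n=2$ the diagram $E_1$ is literally an unknot, so we know $\min_q(E_1)$ exactly rather than just a lower bound.

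First I would establish the base case $k=1$. Given $m > a/3$, I need to show $KC^{\alpha_1}(E_1)$ is acyclic so that Lemma \ref{Lemma3.32} applies to the decomposition
\[KC^{a+1}(\T^{m+1}) = \big(KC^{a}(\T^{m}) \longrightarrow KC^{\alpha_1}(E_1)\big).\]
Using $\alpha_1 = a - 3m - 1$ (from Formula (\ref{alphaitoa}) with $c_1 = m$) and $\min_q(E_1) = -1$, the hypothesis $m > a/3$ gives $3m > a$, hence $\alpha_1 = a - 3m - 1 < -1 = \min_q(E_1)$. Therefore $KC^{\alpha_1}(E_1) = 0$, which is trivially acyclic, and Lemma \ref{Lemma3.32} yields $\chi^{a+1}(\T^{m+1}) \simeq \chi^{a}(\T^m)$.

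Next I would handle the inductive step, which is the analogue of Lemma \ref{mainlemma}. The required implication is
\[m > \tfrac{a}{3} \implies (m+1) > \tfrac{a+1}{3},\]
which is immediate since $m+1 > a/3 + 1 = (a+3)/3 > (a+1)/3$. Thus if the hypothesis holds for the pair $(a,m)$, it also holds for the shifted pair $(a+1, m+1)$, and the base case applies to give $\chi^{a+k+1}(\T^{m+k+1}) \simeq \chi^{a+k}(\T^{m+k})$ at each step.

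I do not expect any real obstacle here: the case $n=2$ is essentially a calculation, since $E_1$ being the unknot bypasses all of the geometric arguments (Stošić's pulling-the-turnback argument, the bound on $x_i$, etc.) that made the $n \geq 4$ case delicate. The only minor subtlety is confirming that when $\alpha_1 < \min_q(E_1)$ strictly, the chain group $KC^{\alpha_1}(E_1)$ is not merely acyclic but in fact zero in that $q$-degree, which is exactly the definition of $\min_q$. Chaining together the base case and the induction gives $\chi^{a+k}(\T^{m+k}) \simeq \chi^{a}(\T^m)$ for all $k \in \N$, as claimed.
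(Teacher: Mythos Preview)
Your proposal is correct and follows essentially the same route as the paper: use $\alpha_1=a-3m-1$ and $\min_q(E_1)=-1$ to see that $m>a/3$ forces $\alpha_1<\min_q(E_1)$, invoke Lemma~\ref{Lemma3.32}, and then check that the bound is preserved under $(a,m)\mapsto(a+1,m+1)$ to induct. One small correction to your final remark: $\min_q$ is defined as the minimal $q$-degree of nonzero Khovanov \emph{homology}, not of the chain complex, so $\alpha_1<\min_q(E_1)$ guarantees that $KC^{\alpha_1}(E_1)$ is acyclic (which is exactly what Lemma~\ref{Lemma3.32} requires) but not that the chain group itself vanishes---this does not affect your argument, since you had already correctly concluded acyclicity.
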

\begin{proof}
The assumption ensures that $\alpha_1 < a-3(\frac{a}{3})-1 = -1 = \min_q(E_1)$, so that in this $q$-degree the $E_1$ portion of the complex for $\T^{m+1}$ is acyclic and can be collapsed using Lemma \ref{Lemma3.32}.  Furthermore, the inequality $m>\frac{a}{3}$ clearly implies $m+k>\frac{a+k}{3}$, so that the result follows as in the proof of Theorem \ref{finalresult} for $n\geq 4$, at the end of the last section.
\end{proof}

Choosing $M$ to be any integer greater than $\frac{a}{3}$, this proves the $n=2$ case of Theorem \ref{finalresult}.  Note that this is a better bound in this case than the bound $m>\frac{a+n-2}{n}=\frac{a}{2}$ which would have been achieved by attempting to simply use Lemmas \ref{StosicLemma1} and \ref{minqbound} for $n=2$.

\section{The Main Result For $n=3$}

The bounds resulting from Lemmas \ref{StosicLemma1} and \ref{minqbound} can be used to extend Theorem \ref{mainthm} to the case of $n=3$, but just as in the case $n=2$, a better bound can be achieved more directly.  The following lemma provides a more precise version of Remarks \ref{StosicBoundn3n2} and \ref{minqboundrmk}.  Once again we continue to use the notations $D_i$ and $E_i$ established in Definition \ref{DandE}, $c_i$ and $x_i$ as in Definitions \ref{cidefinition} and \ref{xidefinition}, and $a_i$ and $\alpha_i$ as in Definition \ref{aialphai}.  The symbol $U$ will denote the unknot, and $U\sqcup U$ the two component unlink.

\begin{lemma}
\label{Ein3}
When $n=3$, we have the following calculations for $E_i$, $c_i$, and $\alpha_i$ depending on the congruence of $m$ mod 3:
\begin{enumerate}
\item $m=3k$ for some $k\in\N$
\begin{align*}
E_1 &= U & E_2 &= U\sqcup U\\
c_1 &= 4k & c_2 &= 4k\\
x_1 &= 0 & x_2 &= 0\\
\alpha_1 &= a-12k & \alpha_2 &= a-1-12k
\end{align*}
\item $m=3k+1$ for some $k\in\N\cup\{0\}$
\begin{align*}
E_1 &= U & E_2 &= U\\
c_1 &= 4k+2 & c_2 &= 4k+1\\
x_1 &= 0 & x_2 &= 0\\
\alpha_1 &= a-12k-6 & \alpha_2 &= a-12k-4
\end{align*}
\item $m=3k+2$ for some $k\in\N\cup\{0\}$
\begin{align*}
E_1 &= U\sqcup U & E_2 &= U\\
c_1 &= 4k+3 & c_2 &= 4k+3\\
x_1 &= 0 & x_2 &= 0\\
\alpha_1 &= a-12k-9 & \alpha_2 &= a-12k-10
\end{align*}
\end{enumerate}
\end{lemma}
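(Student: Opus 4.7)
The plan is to prove Lemma \ref{Ein3} by carrying out the reduction of Lemma \ref{StosicLemma1} explicitly for $n=3$, computing $c_i$ exactly in each case rather than merely bounding it. Write $\T^{m+1}$ as the closure of the three-strand braid $(\sigma_1\sigma_2)^{m+1}$. Then $E_1$ is obtained by 1-resolving the topmost $\sigma_1$ crossing, while $E_2$ comes from 0-resolving that $\sigma_1$ and then 1-resolving the next $\sigma_2$. In either case the resolution produces a turnback near the top of the braid which, together with the closure arcs, can be pulled down and around through the remainder of the braid via R2 moves exactly as in Sto\v{s}i\'{c}'s argument.

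The heart of the calculation is tracking the turnback as it traverses the braid. Each time it is pushed past a crossing via an R2 move, one positive crossing is replaced by a negative crossing (later canceled in a subsequent pass), so $c_i$ counts exactly the total number of such R2 exchanges. Because $(\sigma_1\sigma_2)^3$ is a full twist on three strands and so acts trivially on strand positions, the number of cyclic passes the turnback must make before all original crossings are eliminated is controlled by the residue class of $m$ modulo $3$, which accounts for the three cases. Carrying out the traversal case by case yields the stated values of $c_i$; simultaneously one verifies that the reduced diagram has no remaining crossings (so $x_i=0$, with the final positive R1 kink left in place per the convention established in the proof of Lemma \ref{xibound}) and identifies it as either an unknot or a two-component unlink, depending on whether a small circle splits off during the reduction. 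The value of $\alpha_i$ then follows by substituting $n=3$ and the computed $c_i$ into Equation (\ref{alphaitoa}), giving $\alpha_i = a + 1 - i - 3c_i$, which matches each of the six stated values.

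The main obstacle will be the diagrammatic bookkeeping: avoiding off-by-one errors when the turnback makes multiple cyclic passes around the closure, and correctly detecting the moment at which a disjoint circle splits off (versus remaining joined to the rest of the diagram, which is what controls the $E_i = U$ versus $E_i = U \sqcup U$ dichotomy). Once one case, say $m = 3k$ with $i=1$, is drawn out in full, the remaining five cases are verified by entirely analogous pictures with only the arithmetic counts changing.
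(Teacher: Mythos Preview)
Your proposal is correct and follows essentially the same approach as the paper: both carry out Sto\v{s}i\'{c}'s turnback argument from Lemma~\ref{StosicLemma1} explicitly for $n=3$, count the negative crossings eliminated per full twist to obtain $c_i$ exactly (hence the dependence on $m \bmod 3$), read off the link type of $E_{i,red}$, and then recover $\alpha_i$ from Equation~(\ref{alphaitoa}). The paper's proof is slightly more specific about the Reidemeister bookkeeping per full twist (two R2 moves and two R1 moves eliminate all six crossings, four of them negative), which you may find clarifies your count, but the strategies are the same.
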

\begin{proof}
Following along Sto\v{s}i\'{c}'s proof of Lemma \ref{StosicLemma1}, the turnback at the top of $E_i$ is swung around to the bottom and then passed through each full twist (corresponding to $m=3$) via two Reidemeister 2 moves and two Reidemeister 1 moves, which together eliminate all of the crossings of the full twist (4 of which were negative).  After this the behavior of the turnback is examined at the `top' of the diagram on a case-by-case basis to determine the results above.  Note that the $\alpha_i$ are determined by the $c_i$ as in Equation (\ref{alphaitoa}).  See also the proof of Theorem 8 in \cite{Sto2}.
\end{proof}

From here, the case $n=3$ proceeds in identical fashion to the other cases.

\begin{theorem}
\label{n=3case}
Fix $a\in\Z$.  For $n=3$ and for any $m\geq\frac{a+2}{4}$, we have $\chi^{a+2k}(\T^{m+k})\simeq\chi^{a}(\T^m)$ for all $k\in\N$.
\end{theorem}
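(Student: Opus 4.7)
The plan is to follow the same strategy already used in Theorems \ref{mainthm} and \ref{n=2case}: view the passage from $\T^{m+1}$ to $\T^m$ as the sequence of resolutions $D_0 = \T^{m+1}$, $D_1$, $D_2 = \T^m$, and show that in the relevant $q$-degree the subcomplexes $KC^{\alpha_i}(E_i)$ are acyclic for both $i = 1$ and $i = 2$. Given this, two successive applications of Lemma \ref{Lemma3.32} collapse these pieces and yield
\[
\chi^{a+2}(\T^{m+1}) \;\simeq\; \chi^{a_1}(D_1) \;\simeq\; \chi^{a}(\T^m),
\]
which is the $k=1$ case of the theorem (recalling that $a_0 = a+2$ and $a_2 = a$ when $n=3$).

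The key input is Lemma \ref{Ein3}, which identifies each $E_i$ explicitly as either $U$ or $U\sqcup U$. Consequently $\min_q(E_i)\in\{-1,-2\}$, and acyclicity of $KC^{\alpha_i}(E_i)$ reduces to the strict inequality $\alpha_i < \min_q(E_i)$. I would run through the six combinations in Lemma \ref{Ein3} (two values of $i$ in each of the three congruence classes $m\equiv 0,1,2\pmod{3}$), plug in the tabulated $\alpha_i$, and rewrite $k$ in terms of $m$. In every one of these cases the required inequality simplifies to the single condition $4m \geq a+2$, i.e., $m \geq \frac{a+2}{4}$, which is precisely the hypothesis of the theorem.

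For the inductive step on $k$, the hypothesis $m \geq \frac{a+2}{4}$ immediately gives $m+1 \geq \frac{(a+2)+2}{4}$, which is the $k=1$ hypothesis applied to the pair $(m+1,\,a+2)$. Iterating this observation produces $\chi^{a+2k}(\T^{m+k}) \simeq \chi^a(\T^m)$ for every $k\in\N$, exactly as in the closing arguments of Theorems \ref{mainthm} and \ref{n=2case}.

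The main obstacle is essentially bookkeeping rather than anything conceptual: because $c_i$, and hence $\alpha_i$, jumps across residue classes of $m\bmod 3$, one must individually verify the six inequalities. The fact that they all collapse to the single clean bound $m\geq\frac{a+2}{4}$ is what makes the result sharp and, in particular, strictly better than what one would obtain by applying the generic Lemmas \ref{StosicLemma1} and \ref{minqbound} in the case $n=3$.
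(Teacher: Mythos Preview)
Your proposal is correct and follows essentially the same approach as the paper's proof: use Lemma~\ref{Ein3} to identify each $E_i$ as $U$ or $U\sqcup U$, verify case by case that the hypothesis $m\geq\frac{a+2}{4}$ forces $\alpha_i<\min_q(E_i)$, apply Lemma~\ref{Lemma3.32} twice, and then observe that the bound is preserved under $(m,a)\mapsto(m+1,a+2)$ to induct on $k$. One minor imprecision: not every one of the six cases reduces to exactly $4m\geq a+2$ (a couple give the weaker condition $4m\geq a$), but the conjunction of all six does, so the bound stated is the sharp one and your argument goes through.
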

\begin{proof}
Since $\min_q(U)=-1$ and $\min_q(U\sqcup U) = -2$, the reader can check via Lemma \ref{Ein3} that the assumption ensures that $\alpha_i \leq \min_q(E_i)-1$ in all cases.  For example, in the case $m=3k+2$ the assumption gives
\begin{align*}
3k+2 &\geq \frac{a+2}{4}\\
12k+8 &\geq a+2\\
-2 &\geq a-12k-8
\end{align*}
which forces 
\begin{align*}
\alpha_1&=a-12k-9\\
&\leq-3\\
&\leq\min_q(U\sqcup U) -1
\end{align*}
and
\begin{align*}
\alpha_2&=a-12k-10\\
&\leq-4\\
&<\min_q(U)-1.
\end{align*}
Thus in this $q$-degree all of the $E_i$ portions of the relevant complexes are acyclic and can be collapsed using Lemma \ref{Lemma3.32} as in the proof of Theorem \ref{mainthm}.   Furthermore, the inequality $m\geq\frac{a+2}{4}$ clearly implies $m+k\geq\frac{a+2k+2}{4}$, so that the result follows as in the proof of Theorem \ref{finalresult} for $n\geq 4$.
\end{proof}

This concludes the proof of Theorem \ref{finalresult} for all values of $n$.

\section{Defining $\chi(\T^\infty)$ and Proof of Corollary \ref{finalresultcor}}

The stable homotopy equivalences in Theorem \ref{finalresult} can be viewed as the tail of a sequence of morphisms induced by inclusions of subcomplexes as in the proofs of Theorems \ref{mainthm}, \ref{n=2case} and \ref{n=3case}:
\begin{equation}
\label{hocoseq_unnorm}
\chi^{a-m(n-1)}(\T^{0})\longrightarrow\cdots\longrightarrow \chi^{a-(n-1)}(\T^{m-1})\simeq \chi^a(\T^m)\simeq \chi^{a+(n-1)}(\T^{m+1})\simeq \chi^{a+2(n-1)}(\T^{m+2})\simeq\cdots
\end{equation}
If we shuffle the indexing a bit, letting $j=a-m(n-1)$, we see the sequence:
\begin{equation}
\label{hocoseq}
\chi^j(\T^0)\longrightarrow \chi^{j+(n-1)}(\T)\longrightarrow \cdots \longrightarrow \chi^{j+k(n-1)}(\T^k)\longrightarrow\cdots
\end{equation}
where $\T^0$ is taken to be the closure of the identity braid on $n$ strands, that is, the $n$-component unlink.  This allows us to make the following definition.

\begin{definition}
\label{chiTinfdef}
The \emph{limiting Khovanov stable homotopy type of $n$-strand torus links} is denoted $\chi(\T_n^\infty)$ and is defined as a wedge sum
\begin{equation}
\label{infwedge}
\chi(\T_n^\infty)=\bigvee_{j\in\Z}\chi^j(\T_n^\infty)
\end{equation}
where for each \emph{stable $q$-degree} $j\in\Z$,
\begin{equation}
\label{hocodef}
\chi^j(\T_n^\infty) = \text{hocolim}\left(\chi^j(\T_n^0)\longrightarrow \chi^{j+(n-1)}(\T_n)\longrightarrow \cdots \longrightarrow \chi^{j+k(n-1)}(\T_n^k)\longrightarrow\cdots\right).
\end{equation}
\end{definition}
Here we have reintroduced the number of strands into the notation to clarify the formula presented below.  The existence of these hocolims proves Corollary \ref{finalresultcor}.  Theorem \ref{finalresult} ensures that the sequence stabilizes and, since the maps in the sequence are cofibrations (they are induced by inclusions of subcomplexes), we can conclude that the hocolim is stably homotopy equivalent to the first term in the sequence beyond which stabilization begins.  Theorems \ref{mainthm}, \ref{n=2case}, and \ref{n=3case} provide the explicit bounds from which this term can be calculated.  These simple algebraic manipulations are provided in the next three subsections, but here we record the final results.

\begin{corollary}
\label{chiTinfcalc}
Fix $n\geq 4$ in $\N$, and $j\in\Z$.  Then the limiting stable homotopy type $\chi^j(\T^\infty_n)$ satisfies:
\begin{gather*}
\chi^j(\T^\infty_{n\geq 4}) \simeq \begin{cases}
\chi^{nj+(n-1)^2}(\T_n^{j+n-1}) &\text{if } j\geq 1\\
\chi^{j+n(n-1)}(\T_n^n) &\text{if } j<1
\end{cases}\\
\end{gather*}
\end{corollary}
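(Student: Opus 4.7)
The plan is to determine, for each stable $q$-degree $j$, the first index $k$ at which the sequence \eqref{hocodef} defining $\chi^j(\T_n^\infty)$ has stabilized, and to identify the hocolim with that term. First I would note that Theorem \ref{finalresult} guarantees the sequence becomes a string of stable homotopy equivalences from some point onward, and because the maps are induced by inclusions of subcomplexes (via Lemma \ref{Lemma3.32}) they are cofibrations. Standard homotopy colimit theory then yields $\chi^j(\T_n^\infty) \simeq \chi^{j+k(n-1)}(\T_n^k)$ for any $k$ past which every subsequent map in the sequence is an equivalence.

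Next I would pinpoint the first such $k$ using Theorem \ref{mainthm}. The transition $\chi^{j+k(n-1)}(\T^k) \to \chi^{j+(k+1)(n-1)}(\T^{k+1})$ fits the template $\chi^a(\T^m) \simeq \chi^{a+n-1}(\T^{m+1})$ with $a = j + k(n-1)$ and $m = k$, so the hypothesis $m \geq f(a,n) = \max\!\left(\tfrac{a+n-1}{n}, n\right)$ becomes
\[k \;\geq\; \max\!\left(\tfrac{j + (k+1)(n-1)}{n},\; n\right),\]
which after clearing denominators simplifies to $k \geq \max(j + n - 1,\, n)$.

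Finally I would split into the two cases selecting which side of the max dominates. If $j \geq 1$, the minimum stabilization index is $k = j + n - 1$, and substituting into $j + k(n-1)$ and expanding $j + (j+n-1)(n-1) = nj + (n-1)^2$ produces the representative $\chi^{nj + (n-1)^2}(\T_n^{j+n-1})$. If $j < 1$, the dominant term is $n$, yielding $k = n$ and the representative $\chi^{j + n(n-1)}(\T_n^n)$. A brief consistency check at the boundary $j=1$ verifies that both formulas give $\chi^{n + (n-1)^2}(\T_n^n)$, as they must.

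The main obstacle is purely bookkeeping: carefully translating the $(a,m)$ parametrization of Theorem \ref{mainthm} into the $(j,k)$ parametrization of the sequence, and making sure the simplified inequality is sharp rather than off-by-one. Once this translation is done, the result is immediate from the hocolim-stabilization principle and requires no further homotopy-theoretic input beyond what is already recorded in Theorem \ref{finalresult} and Lemma \ref{Lemma3.32}.
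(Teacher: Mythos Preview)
Your proposal is correct and follows essentially the same approach as the paper: both identify the first stabilization index by solving $m \geq f(a,n)$ along the line $a = j + k(n-1)$, $m = k$, and then split on which term of the max dominates. The only difference is cosmetic---the paper frames this as intersecting lines in the $(a,m)$-plane (Figure~5) while you carry out the equivalent algebraic manipulation directly.
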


\begin{corollary}
\label{chiTinfcalcn2}
Fix $n=2$, and $j\in\Z$.  Then the limiting stable homotopy type $\chi^j(\T^\infty_2)$ satisfies:
\begin{gather*}
\chi^j(\T^\infty_{n=2}) \simeq \begin{cases}
S^0 &\text{for } j=-2,0\\
S^2 &\text{for } j=2\\
\Sigma^{\frac{j}{2}-1} \R P^2 &\text{for } j\equiv 0 \mod 4, j\geq 4\\
S^\frac{j}{2}\bigvee S^{\frac{j}{2}+1} &\text{for } j\equiv 2 \mod 4, j\geq 6\\
\{*\} &\text{else}
\end{cases}
\end{gather*}
\end{corollary}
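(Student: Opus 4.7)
The plan is to reduce the hocolim in Definition \ref{chiTinfdef} to a single unstable Khovanov homotopy type via Theorem \ref{n=2case}, and then read the answer off the classical Khovanov homotopy type of the $(2,m)$ torus links. Setting $a = j + m$ in Theorem \ref{n=2case}, the stabilization threshold $m > a/3$ becomes $m > j/2$, so
\[\chi^j(\T^\infty_{n=2}) \simeq \chi^{j+m}(\T^m) \qquad \text{for any } m \geq \max\bigl(0, \lceil (j+1)/2 \rceil\bigr).\]
It therefore suffices to compute $\chi^{j+m}(\T^m)$ at one such stage $m$ in each case.

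I would then invoke the well-known `staircase' description of $Kh(\T^m) = Kh(T(2,m))$: two $\Z$-generators at the bottom in homological degree $0$ and unstable $q$-degrees $m-2, m$, followed by alternating $\Z$ and $\Z/2\Z$ summands rising diagonally up the $(\deg_h, \deg_q)$-plane and terminating near $(m, 3m-2)$. Translating back to the stable $q$-degree $j = a - m$, the bottom pair contributes $S^0$ at $j = -2, 0$; the first isolated $\Z$ in homological degree $2$ contributes $S^2$ at $j = 2$; and the remaining staircase populates even $j \geq 4$. Odd $j$ yields $\{*\}$ since $Kh^{i,a}(T(2,m))$ is supported in $q$-degrees of parity $m \bmod 2$, forcing $j = a - m$ to be even.

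To turn graded homology into the actual homotopy type, I would appeal to Lipshitz and Sarkar's combinatorial formula for $\text{Sq}^1$ from \cite{LS2}. At each $j \equiv 0 \bmod 4$ with $j \geq 4$, a $\Z$-summand in one homological degree and a $\Z/2\Z$-summand in the next are linked by a nontrivial $\text{Sq}^1$, producing a wedge summand $\Sigma^{j/2 - 1} \R P^2$. At each $j \equiv 2 \bmod 4$ with $j \geq 6$, two $\Z$-summands live in consecutive homological degrees with no $\text{Sq}^1$ link, producing $S^{j/2} \vee S^{j/2 + 1}$. Assembling these pieces yields exactly the case list stated in the corollary.

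The main obstacle is verifying that $\text{Sq}^1$ really is nontrivial on the $\Z/2\Z$-summands in the staircase, since the graded homology alone cannot distinguish a suspended $\R P^2$ from a wedge of two spheres at the spectrum level. I would address this either by applying the combinatorial $\text{Sq}^1$ formula of \cite{LS2} directly to the Khovanov flow category of $T(2,m)$, or by combining the stabilization in Theorem \ref{n=2case} with the already-computed $\text{Sq}^1$ action on small $(2,m)$ cases (e.g.\ the trefoil) and propagating the nontriviality uniformly to every $m$ above the stabilization threshold.
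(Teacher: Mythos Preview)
Your reduction step --- passing from the hocolim to a single $\chi^{j+\hat m}(\T^{\hat m})$ with $\hat m=\lceil (j+1)/2\rceil$ via Theorem \ref{n=2case} --- is exactly what the paper does.

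The difference lies in how you then pin down the homotopy type. You propose computing $\text{Sq}^1$ to resolve the ambiguity between spectra with the same homology, and you flag this as the main obstacle. The paper bypasses the issue entirely: since each $T(2,m)$ is an \emph{alternating} link, its Khovanov homology is thin, and by the result in section 9.3 of \cite{LS} its Khovanov homotopy type is therefore a wedge of Moore spaces. Once that structural fact is in hand, the integral Khovanov homology alone determines the homotopy type, and no Steenrod square computation is required. This is recorded as Lemma \ref{n=2facts}(1) in the paper, and it is precisely the ingredient that turns the computation from laborious into immediate.

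There is also a small inaccuracy in your homology description. For $j\equiv 0\bmod 4$, $j\geq 4$, the relevant $q$-degree of $Kh(\T^{\hat m})$ contains a \emph{single} $\Z/2$ (in homological degree $\hat m$, with $\hat m$ odd), not a $\Z$ together with a $\Z/2$ in adjacent degrees; the suspended $\R P^2$ has reduced integral cohomology $\Z/2$ concentrated in one degree. For $j\equiv 2\bmod 4$, $j\geq 6$, one has two copies of $\Z$ in adjacent homological degrees (even $\hat m$). So your $\text{Sq}^1$ picture is slightly off, though repairable. The alternating-link shortcut renders all of this moot.
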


\begin{corollary}
\label{chiTinfcalcn3}
Fix $n=3$, and $j\in\Z$.  Then the limiting stable homotopy type $\chi^j(\T^\infty_3)$ satisfies:
\begin{gather*}
\chi^j(\T^\infty_{n=3}) \simeq \begin{cases}
\chi^{2j+3}(\T^{\frac{j+3}{2}}) &\text{for } j \text{ odd}, j\geq -3\\
\{*\} &\text{else}
\end{cases}
\end{gather*}
\end{corollary}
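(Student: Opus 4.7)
The plan is to apply Theorem \ref{n=3case} to the hocolim sequence defining $\chi^j(\T^\infty_3)$ in Equation (\ref{hocodef}) for $n=3$, identify the earliest term past which all connecting maps become stable homotopy equivalences, and invoke the fact (already established in Section 7) that because the connecting maps are cofibrations the hocolim is stably equivalent to that term.

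First I would specialize Theorem \ref{n=3case} with $a = j+2k$ and $m = k$ to the single-step comparison between consecutive terms. The hypothesis $m \geq (a+2)/4$ becomes $k \geq (j+2k+2)/4$, which simplifies to $k \geq (j+2)/2$. Thus the map $\chi^{j+2k}(\T^k) \to \chi^{j+2(k+1)}(\T^{k+1})$ is a stable equivalence whenever $k$ is an integer with $k \geq (j+2)/2$. Letting $K$ denote the smallest non-negative integer satisfying $K \geq (j+2)/2$, it follows that $\chi^j(\T^\infty_3) \simeq \chi^{j+2K}(\T^K)$.

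Next I would dispose of the even-$j$ case on parity grounds: the link $\T^k$ (for $n=3$) has $\gcd(3,k)$ components, which is always odd, so its Khovanov homology is supported entirely in odd $q$-degrees. Hence every $\chi^{j+2k}(\T^k)$ is contractible when $j$ is even, and the hocolim is a point. For $j$ odd with $j \geq -3$, the quantity $(j+2)/2$ is a half-integer, so the least non-negative integer exceeding or equal to it is $K = (j+3)/2$; substituting gives $\chi^j(\T^\infty_3) \simeq \chi^{2j+3}(\T^{(j+3)/2})$, matching the first case of the corollary. For $j$ odd with $j \leq -5$, one has $(j+2)/2 < 0$, so $K = 0$ already works and the hocolim is stably equivalent to $\chi^j(\T^0)$; since $\T^0$ is the three-component unlink whose Khovanov homology lives only in $q$-degrees $\{-3,-1,1,3\}$, this spectrum is a point, and the contractible conclusion holds.

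The only real subtlety is the parity bookkeeping: verifying that $K = (j+3)/2$ is the correct non-negative integer exactly in the stated range $j$ odd with $j \geq -3$, and that for all remaining $j$ the telescope is either termwise contractible (the even case) or collapses to a contractible initial term (the $j \leq -5$ odd case). Apart from that, the proof is a direct application of Theorem \ref{n=3case} combined with the telescope-stabilization principle recorded in Section 7, so no substantial analytic or topological obstacle is expected.
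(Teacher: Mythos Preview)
Your proof is correct and follows essentially the same route as the paper: you solve the stabilization inequality from Theorem~\ref{n=3case} directly (the paper phrases this as a line intersection in the $(a,m)$-plane), and you handle the even-$j$ vanishing via the component-count parity of Khovanov homology, which is exactly the content of the paper's Lemma~\ref{n=3noevens}. The treatment of the $j\leq -5$ odd case via the unlink $\T^0$ also matches the paper.
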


\subsection{Proof of Corollary \ref{chiTinfcalc} ($n\geq 4$)}

Fix $n\geq 4$.  From Theorem \ref{mainthm}, we have that $\chi^{a+k(n-1)}(\T^{m+k})\simeq\chi^a(\T^m)$ so long as $m\geq f(a,n)$.  We view our homotopy types as depending on points in the $(a,m)$-plane, where any line of stably equivalent Khovanov homotopy types will have slope $\frac{1}{n-1}$.  Meanwhile, the lower bound $m=f(a,n)=\max(\frac{a+n-1}{n},n)$ combines the horizontal line $m=n$ (which we denote $L_1$) with the line $m=\frac{1}{n}a+\frac{n-1}{n}$ (which we denote $L_2$) with slope $\frac{1}{n}$.  The situation is shown schematically in Figure 5.

\begin{figure}[h]
\centering
\includegraphics[scale=.5]{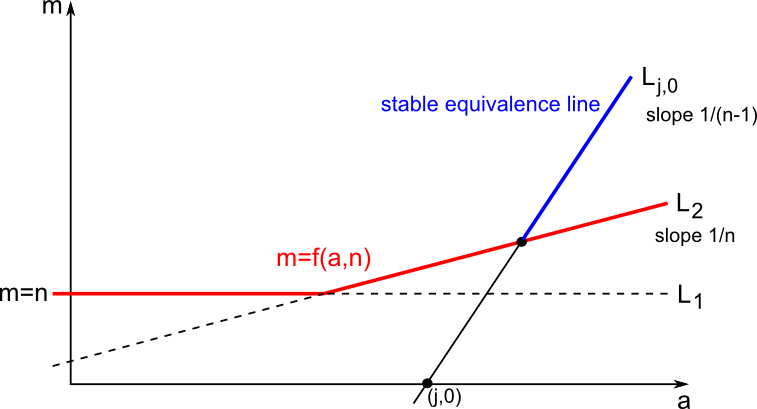}
\caption{Graphing various lines related to $\chi^a(\T^m)$ in the $(a,m)$-plane for fixed $n\geq 4$.  The lower bound $m=f(a,n)$ is shown in red.  A line of stably equivalent Khovanov homotopy types is shown in blue.}
\end{figure}

As indicated in the figure, given the lattice point $(j,0)$ corresponding to $\chi^{j}(\T^{0})$, we draw the line through $(j,0)$ with slope $\frac{1}{n-1}$, which we denote $L_{j,0}$.  The goal is to find the earliest homotopy type after which stabilization begins; that is, we want the intersection point $(\hat{a},\hat{m})=L_{j,0}\cap f(a,n)$ where the blue intersects the red in the figure, giving us the desired $\chi^{\hat{a}}(\T^{\hat{m}})\simeq\chi^{j}(\T^\infty)$.  After some elementary calculations, the piecewise nature of $m=f(a,n)$ results in two cases:

\begin{gather*}
(\hat{a},\hat{m}) = \begin{cases}
(nj+(n-1)^2,j+(n-1)) &\text{for }j\geq 1\\
(j+n(n-1),n) &\text{for }j<1
\end{cases}
\end{gather*}

This proves Corollary \ref{chiTinfcalc}.

\subsection{Proof of Corollary \ref{chiTinfcalcn2} ($n=2$)}

When $n=2$, the picture simplifies considerably because we have only the single linear lower bound $m>\frac{a}{3}$.  If we denote the line $m=\frac{a}{3}$ by $L_1$, and continue to use the notation $L_{j,0}$ for our line passing through $(j,0)$ (now having slope 1), we seek the intersection $(\hat{a},\hat{m})=L_{j,0}\cap L_1$.  See Figure 6.

\begin{figure}[h]
\centering
\includegraphics[scale=.5]{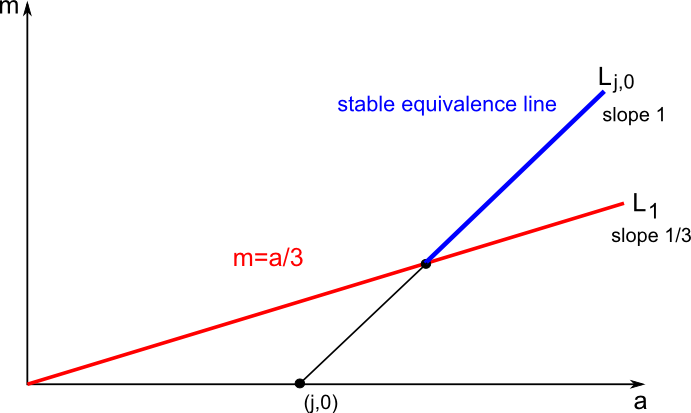}
\caption{Graphing various lines related to $\chi^a(\T^m)$ in the $(a,m)$-plane for $n=2$.  The lower bound $m=\frac{a}{3}$ is shown in red.  A line of stably equivalent Khovanov homotopy types is shown in blue.}
\end{figure}

The intersection point $L_{j,0}\cap L_1$ is quickly found to be the point $\left(\frac{3j}{2},\frac{j}{2}\right)$.  However, our lower bound in this case is a strict inequality, and so we choose $\hat{m}$ (and then $\hat{a}$) as follows:
\begin{equation}
\label{n=2mhat}
\hat{m}=\left\lceil \frac{j+1}{2} \right\rceil
\end{equation}
\begin{equation}
\label{n=2ahat}
\hat{a}=\hat{m}+j
\end{equation}
where $\lceil\cdot\rceil$ indicates the ceiling function.  Thus we have
\begin{equation}
\label{n=2firststep}
\chi^j(\T^\infty_{n=2}) \simeq \chi^{j+\left\lceil \frac{j+1}{2} \right\rceil}(\T^{\left\lceil \frac{j+1}{2} \right\rceil})
\end{equation}
so long as $\left\lceil \frac{j+1}{2} \right\rceil\geq 0$.  For smaller $j$ we have simply $\chi^j(\T^0)=\chi^j(U\sqcup U)$, which is clearly a point for $j<-2$.  From here we use facts about the Khovanov homology of $(2,m)$ torus links to simplify our expressions.

\begin{lemma}
\label{n=2facts}
The knots and links $\T_2^m$ satisfy the following properties:
\begin{enumerate}
\item $\T_2^m$ is an alternating link, and therefore its Khovanov homotopy type is a wedge sum of Moore spaces
\item $Kh^{h,q}(\T^m_2)$ is empty if the parity of $q$ and $m$ do not match (here $h$ indicates homological degree, $q$ indicates $q$ degree)
\item For each odd $m\geq 3$, 
\begin{gather*}
Kh^{h,3m-2}(\T^m_2)\cong \begin{cases}
\Z_2 &\text{for } h=m\\
0 &\text{else}
\end{cases}
\end{gather*}
\item For each even $m\geq 4$,
\begin{gather*}
Kh^{h,3m-2}(\T^m_2)\cong \begin{cases}
\Z &\text{for } h=m-1,m\\
0 &\text{else}
\end{cases}
\end{gather*}
\end{enumerate}
\end{lemma}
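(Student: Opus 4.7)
The plan is to handle the four parts in turn: (1) and (2) by structural and parity arguments, and (3)--(4) by an explicit enumeration of the chain complex at $q$-degree $3m - 2$.

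For (1), the closure of $\sigma_1^m$ is isotopic to the standard alternating ``chain'' diagram for $\T^m_2$, so this link is alternating. By the structural results of Lee and Shumakovitch, the Khovanov homology of an alternating link is H-thin with all torsion being $2$-torsion; since each wedge summand $\chi^q(\T^m_2)$ then has reduced cohomology concentrated in a single homological degree, a Postnikov argument (or the Steenrod analysis in \cite{LS2}) identifies it with a wedge of Moore spaces.

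For (2), I would work directly from $\deg_q = \deg_h + (\#v_+ - \#v_-) + (n_+ - n_-)$. Writing $c$ for the circle count of a given resolution, we have $\#v_+ - \#v_- \equiv c \pmod{2}$, so $\deg_q \equiv \deg_h + c + n_+ - n_- \pmod{2}$. Flipping the resolution at any single crossing changes both $\deg_h$ and $c$ by $\pm 1$, so the parity of $\deg_h + c$ is constant across the chain complex. Evaluating at the all-$0$ resolution of $\T^m_2$ (where $\deg_h = 0$, $c = 2$, $n_+ = m$, $n_- = 0$) then yields $\deg_q \equiv m \pmod{2}$ on every generator.

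For (3) and (4), I would enumerate $KC^{h, 3m-2}(\T^m_2)$ explicitly. A short analysis of the closure of $\sigma_1^m$ shows that the number of circles in a resolution depends only on $k = \#(\text{1-resolutions})$, with $c(0) = 2$, $c(1) = 1$, and $c(k) = k$ for every $k \geq 2$. Combined with the constraint $\#v_+ - \#v_- = 2m - 2 - h$ forced by landing at $q = 3m - 2$, this makes $KC^{h, 3m-2}$ vanish whenever $h \notin \{m-1, m\}$ (once $m \geq 3$); each of the two surviving chain groups is free of rank exactly $m$, with geometric bases indexed by the position of the unique $0$-resolution (at $h = m-1$) or by the position of the unique $v_-$ label among the $m$ circles of the all-$1$ resolution (at $h = m$). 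The differential between them is a concrete $m \times m$ near-circulant matrix of $\pm 1$ entries coming from the split comultiplications on the circle created whenever a remaining $0$ is flipped to a $1$, with signs dictated by the Khovanov edge convention.

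The crux is then the linear-algebra computation: I expect this matrix to have determinant $\pm 2$ for odd $m$ (yielding kernel $0$ and cokernel $\Z_2$, hence (3)) and rank exactly $m - 1$ for even $m$, with kernel spanned by $(1, 1, \ldots, 1)$ and cokernel $\Z$ (hence (4)). I anticipate the sign bookkeeping in the differential to be the main technical hurdle; once the matrix is in hand, the determinant for odd $m$ follows from a cofactor expansion exploiting the near-circulant structure, and the kernel/cokernel analysis for even $m$ is immediate from row reduction. Alternatively, since the Khovanov homology of $(2, m)$ torus links has been computed in full in \cite{Khov}, parts (3) and (4) can be extracted directly by restricting to the appropriate bigradings.
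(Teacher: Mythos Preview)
The paper's own proof is essentially a citation: part~(1) is referred to section~9.3 of \cite{LS} (thinness implies wedge of Moore spaces), and parts~(2)--(4) are declared ``simple calculations'' left to the reader with a pointer to \cite{CK}. Your proposal is therefore far more explicit than what the paper provides, and your arguments for (2)--(4) are correct: the parity argument for (2) is clean, the circle count $c(k)=k$ for $k\geq 1$ is right (and really does depend only on $k$, since $k$ cap--cups cut the braid closure into exactly $k$ arcs regardless of their placement), and the resulting $m\times m$ differential has Smith normal form $\mathrm{diag}(1,\dots,1,2)$ for $m$ odd and $\mathrm{diag}(1,\dots,1,0)$ for $m$ even, exactly as you predict. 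The sign bookkeeping you worry about is in fact harmless here, because multiplying columns by $\pm 1$ is a unimodular operation and does not affect the elementary divisors.

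There is, however, one genuine slip in your argument for (1): the claim that each $\chi^q(\T_2^m)$ has reduced cohomology concentrated in a \emph{single} homological degree is false---your own part~(4) exhibits $\Z$ in both degrees $m-1$ and $m$ at $q=3m-2$. Thinness only confines the homology to two adjacent $h$-values per $q$-degree, so a bare Postnikov argument does not immediately collapse the spectrum. The correct statement (and this is what \cite{LS}, not \cite{LS2}, proves in its section on thin links) is that thinness together with the fact that all torsion is $2$-torsion forces the relevant $k$-invariants to vanish, and \emph{that} is what yields the wedge-of-Moore-spaces conclusion. Once you replace the ``single degree'' step with this citation, your proof of (1) goes through.
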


\begin{proof}
The simplicity of the Khovanov homotopy type for alternating links is proved in section 9.3 of \cite{LS} as a consequence of their Khovanov homology being homologically thin.  The rest of the facts here are simple calculations and are left to the reader (see also section 4.3.1 in \cite{CK})
\end{proof}

Returning to our calculation, we find that for odd $j$, the ceiling functions can be ignored and Equation (\ref{n=2mhat}) implies $j=2\hat{m}-1$.  This allows us to write, for $j$ odd,
\[\chi^j(\T_2^\infty) \simeq \chi^{3\hat{m}-1}(\T^{\hat{m}})\]
and since the parity of $3\hat{m}-1$ and $\hat{m}$ are opposite, Lemma \ref{n=2facts} shows that the Khovanov homotopy type in this case is a point.  For $j\cong 0\mod 4$ we find $\hat{m}$ even and $\hat{a}=3\hat{m}-2$, while for $j\cong 2 \mod 4$ we find $\hat{m}$ odd and again $\hat{a}=3\hat{m}-2$, allowing us to use Lemma \ref{n=2facts} to build the Khovanov homotopy types of such cases as wedge sums of Moore spaces $\R P^2$ and $S^0$ suspended appropriately to sit in the correct homological grading.  In both cases, we have $j=2\hat{m}-2$ and, after suitable algebraic manipulations of the indices, Equation (\ref{n=2firststep}) gives the desired result.  The final case is $\chi^2(\T_2^\infty)$ which Equation (\ref{n=2firststep}) gives as $\chi^4(\T^2_2)$ which is calculated in section 9 of \cite{LS} to be $S^2$.  This concludes the proof of Corollary \ref{chiTinfcalcn2}.

\subsection{Proof of Corollaries \ref{chiTinfcalcn3} and \ref{infmanystsquares} ($n=3$)}

For the case $n=3$ the picture is essentially the same as in the $n=2$ case (Figure 6).  Our line of stably equivalent homotopy types $L_{j,0}$ now has slope $\frac{1}{2}$, while the bounding line $L_1$ has equation $m=\frac{a+2}{4}$ and slope $\frac{1}{4}$.  The intersection is $L_{j,0}\cap L_1 = (2+2j,\frac{j+2}{2})$.  In this case if $j$ is even, we can use this point; if $j$ is odd, we find the next $\Z^2$ lattice point above this along $L_{j,0}$ to learn
\begin{gather}
\label{n=3firststep}
\chi^j(\T_{n=3}^\infty) \simeq \begin{cases}
\chi^{2j+2}(\T_3^{\frac{j+2}{2}}) &\text{for } j\geq -2 \text{ even}\\
\chi^{2j+3}(\T_3^{\frac{j+3}{2}}) &\text{for } j\geq -3 \text{ odd}\\
\chi^j(\T_3^0) &\text{for } j<-3
\end{cases}.
\end{gather}
The final case is simply to ensure that $m\geq 0$, and will be rendered moot shortly.

As in the case of $n=2$, we proceed by using some simple facts about the Khovanov homology of $(3,m)$ torus links.  First we note that, for $\T_3^0=U\sqcup U\sqcup U$, the lowest $q$-degree available for the Khovanov chain complexes is precisely $-3$, and so the final case in Equation (\ref{n=3firststep}) produces no non-trivial Khovanov homotopy types.  Second, we prove the following lemma.

\begin{lemma}
\label{n=3noevens}
The Khovanov chain complex $KC(\T_3^m)$ is empty in even $q$ degrees for any $m$.
\end{lemma}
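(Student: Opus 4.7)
The plan is to show that the parity of $\deg_q$ is the same for every generator of $KC(\T_3^m)$ and then compute that shared parity on a single easy resolution. Since $\T_3^m$ is the closure of the $(3,m)$ torus braid, all $2m$ crossings are positive, so $n_+ = 2m$ and $n_- = 0$. Applying Equations (\ref{hdeg}) and (\ref{qdeg}), for a generator living in a resolution with $r$ one-resolutions, $c$ total circles, $a$ of which are labelled $v_+$ and $c-a$ of which are labelled $v_-$, we obtain
\[
\deg_q = r + 2m + (2a - c).
\]
Reducing mod $2$ kills $2m$ and $2a$ and leaves $\deg_q \equiv r + c \pmod 2$.

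Next I would verify that $r + c$ has the same parity for every resolution of the diagram. Changing a single crossing from its $0$-resolution to its $1$-resolution changes $r$ by $1$, and at the same time either merges two circles into one or splits one circle into two, so $c$ changes by $\pm 1$. Hence $r + c$ changes by $0$ or $\pm 2$ and its parity is preserved. Passing between any two resolutions of the $2m$ crossings by a sequence of such single-crossing flips, we conclude that the parity of $r + c$ is an invariant of the diagram, independent of the resolution.

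It therefore suffices to compute the parity at the all-$0$ resolution. Taking all $0$-resolutions in a braid diagram simply removes all crossings, leaving three parallel strands; closing them off produces three disjoint circles. Thus $r = 0$ and $c = 3$, giving $r + c = 3$, which is odd. So $\deg_q$ is odd for every generator of $KC(\T_3^m)$, and no generators exist in even $q$-degree.

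There is no real obstacle here, only two small points to watch: that the four-term formula for $\deg_q$ really does collapse to $r + c$ modulo $2$ (all the crossing-count contributions are even because $n_+ = 2m$ and $n_- = 0$), and that the merge/split dichotomy at a crossing change always gives $\Delta c = \pm 1$ rather than $0$, so that the parity computation is robust across the entire cube of resolutions.
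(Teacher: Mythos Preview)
Your proof is correct and follows essentially the same approach as the paper: compute the $q$-degree at the all-zero resolution (three circles, $n_+=2m$, $n_-=0$) to see it is odd, then invoke the fact that the parity of $q$-degree is constant across the cube of resolutions. The paper simply asserts this parity invariance as known, whereas you supply the explicit merge/split argument that $r+c$ is preserved modulo $2$; otherwise the two arguments are the same.
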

\begin{proof} Regardless of $m$, the all 0-resolution will consist of 3 circles and will be in homological degree 0 since all of the crossings were positive.  Furthermore, there must be an even number of crossings since $n_+=(3-1)m$, $n_-=0$.  Thus any generator $x\in KC(\T_3^m)$ in this resolution must have
\begin{align*}
\deg_q(x) &= \deg_h(x)+n_+-n_-+(\#v_+-\#v_-)\\
&= 0+2m+0+(\#v_+-\#v_-)
\end{align*}
which must clearly be odd for 3 circles.  The parity of $q$-degree for the Khovanov chain complex is constant throughout all homological degrees, proving the claim.
\end{proof}

Combining Equation (\ref{n=3firststep}) with Lemma \ref{n=3noevens} and the remarks shortly beforehand completes the proof of Corollary \ref{chiTinfcalcn3}.

\begin{proof}[Proof of Corollary \ref{infmanystsquares}]
From Theorem \ref{n=3case} and Corollary \ref{chiTinfcalcn3}, we have in particular for $j=3$,
\[\chi^3(\T_3^\infty)\simeq\chi^9(\T_3^3)\simeq\chi^{11}(\T_3^4)\simeq\chi^{13}(\T_3^5)\simeq\cdots\]
and in \cite{LS2}, a non-trivial Steenrod Square $Sq^2$ operation is to used to calculate $\chi^{11}(\T_3^4)\simeq \Sigma^{-1}(\R P^5 / \R P^2 )$.  Thus all of the $\chi^{2m+3}(\T_3^m)$ beyond this point have the same Khovanov homotopy type with non-trivial $Sq^2$.
\end{proof}

\bibliographystyle{alpha}

\bibliography{Khovanov_Homotopy_Type_Stabilizes}

\begin{thebibliography}{GORS14}

\bibitem[CK12]{CK}
Benjamin Cooper and Vyacheslav Krushkal.
\newblock Categorification of the {J}ones-{W}enzl projectors.
\newblock {\em Quantum Topol.}, 3(2):139--180, 2012.

\bibitem[GOR13]{GOR}
Eugene Gorsky, Alexei Oblomkov, and Jacob Rasmussen.
\newblock On stable {K}hovanov homology of torus knots.
\newblock {\em Exp. Math.}, 22(3):265--281, 2013.

\bibitem[GORS14]{GORS}
Eugene Gorsky, Alexei Oblomkov, Jacob Rasmussen, and Vivek Shende.
\newblock Torus knots and the rational {DAHA}.
\newblock {\em Duke Math. J.}, 163(14):2709--2794, 2014.

\bibitem[Hog]{Hog}
Matthew Hogancamp.
\newblock Stable homology of torus links via categorified {Y}oung symmetrizers
  {I}: one-row partitions.
\newblock arXiv:1505.08148.

\bibitem[Kho00]{Khov}
Mikhail Khovanov.
\newblock A categorification of the {J}ones polynomial.
\newblock {\em Duke Math. J.}, 101(3):359--426, 2000.

\bibitem[LLS]{LLS}
Tyler Lawson, Robert Lipshitz, and Sucharit Sarkar.
\newblock The cube and the {B}urnside category.
\newblock arXiv:1505.00512.

\bibitem[LS14a]{LS}
Robert Lipshitz and Sucharit Sarkar.
\newblock A {K}hovanov stable homotopy type.
\newblock {\em J. Amer. Math. Soc.}, 27(4):983--1042, 2014.

\bibitem[LS14b]{LS3}
Robert Lipshitz and Sucharit Sarkar.
\newblock A refinement of {R}asmussen's {$S$}-invariant.
\newblock {\em Duke Math. J.}, 163(5):923--952, 2014.

\bibitem[LS14c]{LS2}
Robert Lipshitz and Sucharit Sarkar.
\newblock A {S}teenrod square on {K}hovanov homology.
\newblock {\em J. Topol.}, 7(3):817--848, 2014.

\bibitem[Ras10]{Ras}
Jacob Rasmussen.
\newblock Khovanov homology and the slice genus.
\newblock {\em Invent. Math.}, 182(2):419--447, 2010.

\bibitem[Roz14]{Roz}
Lev Rozansky.
\newblock An infinite torus braid yields a categorified {J}ones-{W}enzl
  projector.
\newblock {\em Fund. Math.}, 225(1):305--326, 2014.

\bibitem[Sto07]{Sto}
Marko Sto{\v{s}}i{\'c}.
\newblock Homological thickness and stability of torus knots.
\newblock {\em Algebr. Geom. Topol.}, 7:261--284, 2007.

\bibitem[Sto09]{Sto2}
Marko Sto{\v{s}}i{\'c}.
\newblock Khovanov homology of torus links.
\newblock {\em Topology Appl.}, 156(3):533--541, 2009.

\end{thebibliography}


\end{document}